\documentclass[a4paper,10pt]{amsart}

\usepackage{amsmath,amsthm,amssymb,amsfonts,float,enumerate,hyperref}
\usepackage[pdftex]{graphicx}


\oddsidemargin = 9pt
\evensidemargin = 9pt
\textwidth = 440pt

\allowdisplaybreaks[3]

\newcommand{\R}{\mathbb{R}}

\newcommand{\Z}{\mathbb{Z}}
\newcommand{\N}{\mathbb{N}}

\newcommand{\PV}{\operatorname{P.V.}}

\renewcommand{\Re}{\operatorname{Re}}

\newtheorem{thm}{Theorem}[section]

\newtheorem{lem}[thm]{Lemma}

\theoremstyle{definition}
\newtheorem{defn}[thm]{Definition}
\newtheorem{rem}[thm]{Remark}

\numberwithin{equation}{section}

\keywords{Fractional discrete Laplacian, error of discrete approximation, semidiscrete heat equation}

\subjclass[2010]{Primary: 26A33, 35R11,49M25. Secondary: 35K05, 39A12, 65N15}

\begin{document}

\title[Fractional discrete Laplacian vs discretized fractional Laplacian]
{Fractional discrete Laplacian\\versus discretized fractional Laplacian}

\author[\'O. Ciaurri, L. Roncal, P. R. Stinga, J. L. Torrea, and J. L. Varona]
{\'Oscar Ciaurri \and Luz Roncal \and Pablo Ra\'ul Stinga \and Jos\'e L. Torrea \and Juan Luis Varona}

\address[\'O. Ciaurri, L. Roncal, and J. L. Varona]{%
Departamento de Matem\'aticas y Computaci\'on\\
Universidad de La Rioja\\
26004 Logro\~no, Spain}
\email{\{oscar.ciaurri,luz.roncal,jvarona\}@unirioja.es}

\address[P. R. Stinga]{Department of Mathematics\\
The University of Texas at Austin\\
1 University Station, C1200, Austin\\
TX 78712-1202, United States of America}
\email{stinga@math.utexas.edu}

\address[J. L. Torrea]{Departamento de Matem\'aticas, Facultad de Ciencias\\
Universidad Aut\'onoma de Madrid\\
28049 Madrid, Spain\\
and
ICMAT-CSIC-UAM-UCM-UC3M}
\email{joseluis.torrea@uam.es}

\thanks{Research partially supported by grants MTM2011-28149-C02-01
and MTM2012-36732-C03-02 from Spanish Government}

\begin{abstract}
We define and study some properties of the fractional powers of the discrete Laplacian
$$(-\Delta_h)^s,\quad\hbox{on}~\mathbb{Z}_h = h\mathbb{Z},$$
for $h>0$ and $0<s<1$.
A comparison between our fractional discrete Laplacian and the \textit{discretized} continuous fractional Laplacian as $h\to0$
is carried out. We get estimates in $\ell^\infty$ for the error of the approximation
in terms of $h$ under minimal regularity assumptions.
Moreover, we provide a pointwise formula with an explicit kernel and deduce H\"older estimates
for $(-\Delta_h)^s$.
A study of the negative powers (or discrete fractional integral) $(-\Delta_h)^{-s}$ is also sketched.
Our analysis is mainly performed in dimension one. Nevertheless, we show certain asymptotic
estimates for the kernel
in dimension two that can be extended to higher dimensions.
Some examples are plotted to illustrate the comparison in both one and two dimensions.
\end{abstract}

\maketitle

\section{Introduction and main results}
\label{Intro}

The fractional Laplacian, understood as a positive power of the classical Laplacian,
has been present for long time in several areas of Mathematics, like Fractional Calculus and Functional Analysis.
However, although this operator was used for some differential equations in Physics, 
it was not until the first part of the last decade when it became a very popular object in
the field of Partial Differential Equations.  Indeed, equations involving fractional Laplacians have been
one of the most studied research topics in the present century.
Nowadays hundreds of papers can be found in the literature with the phrase ``fractional Laplacian'' in their titles.
We recall  that the fractional  Laplacian on $\R^n$ can be defined
for $0<s<1$ and good enough functions $u$ as, see~\cite{Landkof},
\[
(-\Delta)^su(x) =  c_{n,s}\PV \int_{\R^n} \frac{u(x)-u(y)}{|x-y|^{n+2s}}\,dy.
\]
The trigger that produced the outbreak in the field
was the paper by L. Caffarelli and L.~Silvestre \cite{Caffarelli-Silvestre}. The essence of that work is to immerse the fractional Laplacian (a nonlocal operator)  on $\R^n$ into a (local) partial differential equation problem in $\R^{n+1}$, where the known methods of PDEs can be applied. Since the appearance of that paper there has been
a substantial revision of a big amount of problems in differential equations where the 
Laplacian is replaced by the fractional Laplacian.
Of course in this trend the Numerical Analysis should have to be present.
Several recent works can be found in this direction, see \cite{Bonito-Pasciak,DelTeso,Huang-Oberman,Nochetto,Zoia}.
One of the main troubles to overcome in the numerical approach is the nonlocality of the operator.

On the other hand, in the past few years the language of semigroups has been used
as a versatile tool in the study
of different aspects and scenarios for the fractional Laplacian
with a great success. See \cite{Caffarelli-Stinga,FiveGuys,Gale, Roncal-Stinga, RS, Stinga, Stinga-Torrea}. 
In particular, in \cite{FiveGuys} it was possible to define the fractional discrete Laplacian 
applying this approach.
Therefore the semigroup language gives us the chance to define and study the fractional discrete Laplacian.
An obvious question arises immediately: can this language be used to develop some
kind of Numerical Analysis for the fractional Laplacian? This is exactly our aim here, to present a discretization method to approximate the fractional Laplacian.

Let us give a brief description of our results.
Consider a mesh of size $h>0$ on $\R$ given by
$\Z_h = \{ hj:j\in\Z\}$. The discrete Laplacian $\Delta_h$ on the mesh is given by
$$-\Delta_hu_j = -\frac{1}{h^2} (u_{j+1}-2u_{j}+u_{j-1}).$$
Here $u_j$, $j\in\Z$, is a function on $\Z_h$. 
The \textit{fractional discrete Laplacian} $(-\Delta_h)^s$ is defined with 
the semigroup language:
\begin{equation}
\label{definition}
(-\Delta_h)^su_j=\frac{1}{\Gamma(-s)}\int_0^\infty\big(e^{t\Delta_h}u_j-u_j\big)\,\frac{dt}{t^{1+s}},
\end{equation}
where $w_j(t)=e^{t\Delta_h}u_j$ is the solution  to the semidiscrete heat equation
\begin{equation*}
\begin{cases}
\partial_tw_j=\Delta_hw_j, &\text{in}~\Z_h\times(0,\infty),\\
w_j(0)=u_j, &\text{on}~\Z_h.
\end{cases}
\end{equation*}
Now given a function $u=u(x)$ defined on $\R$, we consider its restriction $r_hu$
to the mesh as defined by $(r_h u)_j\equiv r_hu(j):= u(hj)$.
Our goal is to estimate, as a function of the size $h$ of the mesh, differences of the type
\[
\big\|(-\Delta_h)^s(r_hu)-r_h \big((-\Delta)^s u\big) \big\|_{\ell^\infty}.
\]
Certainly the estimates will depend on the smoothness of the function,
as our first main result shows.

\begin{thm}
\label{thm:consistencia1d}
Let $0<\beta\le 1$ and $0<s<1$.
\begin{enumerate}[(i)]
\item \label{(iCon)} Let $u\in C^{0,\beta}(\R)$ and $2s<\beta$. Then
$$\|\, (-\Delta_h)^s(r_hu)-r_h((-\Delta)^su)\, \|_{\ell^\infty}\leq C[u]_{C^{0,\beta}(\R)}h^{\beta-2s}.$$
\item \label{(iiCon)} Let $u\in C^{1,\beta}(\R)$ and $2s<\beta$. Then
$$\|D_+(-\Delta_h)^s(r_hu)-r_h(\tfrac{d}{dx}(-\Delta)^su)\|_{\ell^\infty}\leq C[u]_{C^{1,\beta}(\R)}h^{\beta-2s}.$$
\item \label{(iiiCon)} Let $u\in C^{1,\beta}(\R)$ and $2s>\beta$. Then
$$\|(-\Delta_h)^s(r_hu)-r_h((-\Delta)^su)\|_{\ell^\infty}\leq C[u]_{C^{1,\beta}(\R)}h^{\beta-2s+1}.$$
\item \label{(ivCon)} Let $u\in C^{k,\beta}(\R)$ and assume that $k+\beta-2s$ is not an integer, with $2s<k+\beta$. Then
$$\|D_+^l(-\Delta_h)^s(r_hu)-r_h(\tfrac{d^{l}}{dx^{l}}(-\Delta)^su)\|_{\ell^\infty}\leq
C[u]_{C^{k,\beta}(\R)}h^{\beta-2s+k-l},$$
where $l$ is the integer part of $k+\beta-2s$.
\end{enumerate}
The constants $C$ above depend only on $s$ and $\beta$, but not on $h$ or $u$ and
$\frac{d}{dx}$ denotes the standard derivative. For a function $u_j$ on the mesh, we let
$$D_+u_j= \frac{u_{j+1}-u_j}{h},$$
and by $D_{+}^{l}$ we mean that we apply the operator $D_{+}$ to $u_j$ $l$-times.
\end{thm}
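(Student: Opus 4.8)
The plan is to run everything through the heat-semigroup subordination formulas. Since $(r_hu)_j=u(hj)$, the terms $-u_j$ and $-u(hj)$ cancel on subtraction, so at every mesh point
\[
(-\Delta_h)^s(r_hu)_j-r_h\big((-\Delta)^su\big)_j=\frac{1}{\Gamma(-s)}\int_0^\infty\Big(e^{t\Delta_h}(r_hu)_j-(e^{t\Delta}u)(hj)\Big)\,\frac{dt}{t^{1+s}},
\]
and the whole problem reduces to a pointwise-in-$t$ comparison of the semidiscrete heat evolution with the sampled continuous one, integrated against $dt/t^{1+s}$. Both evolutions are convolutions against explicit mass-one kernels: $e^{t\Delta_h}v_j=\sum_{k\in\Z}G_h(hk,t)\,v_{j-k}$ with $G_h(hk,t)=e^{-2t/h^2}I_k(2t/h^2)$ (modified Bessel functions; $G_h$ even in $k$, with $\sum_k(hk)^2G_h(hk,t)=2t$), and $(e^{t\Delta}u)(x)=\int_\R W_t(x-y)u(y)\,dy$ with the even Gauss--Weierstrass kernel $W_t$, for which $\int_\R y^2W_t(y)\,dy=2t$. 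A short preliminary observation, using the super-exponential/Gaussian decay of these kernels together with their unit mass (to subtract $u(hj)$ inside the integral), shows that all the quantities above are well defined for merely Hölder, possibly unbounded, $u$.

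It suffices to prove (iv); (i)--(iii) are its special cases $k=l=0$, $k=l=1$, and $k=1,\ l=0$ (where the hypotheses force $2s<\beta$, $2s<\beta$, $2s>\beta$, respectively), and since $l=\lfloor k+\beta-2s\rfloor$ with $\beta-2s\in(-2,1)$ one always has $m:=k-l\in\{0,1,2\}$. Using that $D_+$ commutes with $e^{t\Delta_h}$ (both act by convolution on $\Z_h$) and $\tfrac{d}{dx}$ with $e^{t\Delta}$, hence with the fractional powers, the left-hand side equals $(-\Delta_h)^s(D_+^lr_hu)_j-r_h\big((-\Delta)^su^{(l)}\big)_j$; and, writing $(D_+^lr_hu)_j=\int_{[0,1]^l}u^{(l)}\big(hj+h(t_1+\cdots+t_l)\big)\,dt_1\cdots dt_l$, one recognizes this as the semigroup comparison above, now applied to a local average of $v:=u^{(l)}\in C^{m,\beta}$ (the averaging is carried along and contributes only comparable terms).

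For the semigroup comparison I would Taylor-expand $v$ to order $m$ inside both $e^{t\Delta_h}(r_hv)_j-v(hj)$ and $(e^{t\Delta}v)(hj)-v(hj)$. Since the kernel moments of orders $0,1,2$ agree exactly (unit mass; vanishing first moments by symmetry; $\sum_k(hk)^2G_h(hk,t)=2t=\int_\R y^2W_t(y)\,dy$) and $m\le2$, the Taylor polynomial cancels identically between the two evolutions, leaving the comparison of the $m$-th remainder $R$ — a function vanishing to order $m$ at $0$ with $|R(y)|\lesssim[u]_{C^{k,\beta}}|y|^{m+\beta}$, similarly for its derivatives. Inserting the intermediate ``Gaussian Riemann sum'' $h\sum_kW_t(hk)R(hk)$ splits $\sum_kG_h(hk,t)R(hk)-\int_\R W_t(y)R(y)\,dy$ into a kernel-comparison term $\sum_k\big(G_h(hk,t)-hW_t(hk)\big)R(hk)$ and a quadrature error $h\sum_kW_t(hk)R(hk)-\int_\R W_t(y)R(y)\,dy$ for the smooth weight $W_t$; both are small precisely because $R$ vanishes to high order near $0$. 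Integrating against $dt/t^{1+s}$, the scale $t\simeq h^2$ is the crossover and one reads off the exponent $h^{m+\beta-2s}=h^{\beta-2s+k-l}$; the assumption $k+\beta-2s\notin\Z$ (in particular $\beta\neq2s$ in (i)) is exactly what keeps the resulting $t$-integrals off their logarithmically divergent borderlines.

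The crux — and the step I expect to be the main obstacle — is the large-time comparison of the two heat kernels, namely that $|G_h(hk,t)-hW_t(hk)|$ is of relative size $O(h^2/t)$ with a Gaussian profile, uniformly in $k$. On the symbol side this is the statement $|\xi|^2-\tfrac{4}{h^2}\sin^2(h\xi/2)=O(h^2|\xi|^4)$ on bounded $h\xi$, the contribution of high frequencies being controlled by the exponential damping of both heat multipliers; on the kernel side it is a uniform asymptotic expansion of $e^{-2t/h^2}I_k(2t/h^2)$ as $t/h^2\to\infty$ (an Edgeworth-type refinement of the local central limit theorem for the underlying random walk). Making this precise and uniform in $(k,t)$, and then keeping careful track of the cancellations so that exactly the power $h^{\beta-2s+k-l}$ (and nothing worse) survives in each of the cases $k-l=0,1,2$, is the delicate part; the small-$t$ bounds, the quadrature estimate, and the remaining manipulations are then routine.
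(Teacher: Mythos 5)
Your reduction to a pointwise-in-$t$ comparison of the two heat semigroups is legitimate, the moment-matching and Taylor bookkeeping are consistent (the exponents do come out right in each regime $t\le h^2$, $t\ge h^2$ and for each $m=k-l\in\{0,1,2\}$), and the route is genuinely different from the paper's. But the argument is not complete: the estimate you yourself single out as the crux --- a bound of the form $|e^{-2t/h^2}I_k(2t/h^2)-hW_t(hk)|\le C\,(h^2/t)\,hW_t(hk)$ uniformly in $k$ and $t\ge h^2$ --- is asserted via a symbol heuristic and an appeal to Edgeworth-type local CLT refinements, not proved, and as literally stated it is \emph{false} in the far tail: for $|k|\gg t/h^2$ one has $e^{-2\tau}I_k(2\tau)\approx e^{-k\log(k/(e\tau))}$ with $\tau=t/h^2$, which dominates the Gaussian $e^{-k^2/(4\tau)}$ by an unbounded relative factor, so a ``relative error $O(h^2/t)$ with Gaussian profile, uniformly in $k$'' cannot hold; one must split into a bulk regime $|k|\lesssim\sqrt{\tau}$ (where a genuine two-parameter uniform expansion of $I_k(2\tau)$, e.g.\ by saddle point, is needed --- the asymptotics \eqref{eq:asymptotics-infinite} and \eqref{eq:asymptotics-order-large} quoted in the paper are each one-parameter and do not suffice) and a large-deviation regime handled by moment or Chebyshev bounds against the polynomial weight $|hk|^{m+\beta}$. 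All of the real difficulty of the theorem is concentrated in that lemma, so until it is stated precisely and proved the proposal is a program rather than a proof. A smaller glossed point: in reducing (iv) to the semigroup comparison for $v=u^{(l)}$ you must still estimate $(-\Delta_h)^s$ applied to the difference between $D_+^l r_hu$ and $r_hu^{(l)}$; this works by the mean value theorem plus the H\"older continuity of $u^{(k)}$ and the summability of $K_s$, exactly as in the paper's proof of part (ii), but it does not follow from ``the averaging contributes only comparable terms.''

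For contrast, the paper sidesteps the heat-kernel comparison entirely by doing the $t$-integration \emph{first}: by \eqref{eq:rusos} the kernel $K_s(m)=\frac{1}{|\Gamma(-s)|}\int_0^\infty e^{-2t}I_m(2t)\,t^{-1-s}\,dt$ has the closed form \eqref{eq:fractionalKernelOned} as a ratio of Gamma functions, and the whole comparison with the continuous singular-integral kernel reduces to the elementary one-variable estimate $|K_s(m)-c_s|m|^{-1-2s}|\le C_s|m|^{-2-2s}$ (Lemma~\ref{eq:lowerBound}, proved from the Tricomi--Erd\'elyi integral representation of $\Gamma(m-s)/\Gamma(m+1+s)$), followed by the cell-by-cell Riemann-sum comparison of Lemma~\ref{lem:compa}. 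If you wish to salvage your approach with minimal extra work, the most economical fix is precisely this trick: integrate your kernel difference in $t$ before estimating anything, which converts the hard uniform local-CLT statement you would otherwise need into Gamma-function asymptotics.
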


To prove this result we need the explicit formula for $(-\Delta_h)^s$ presented in the following statement.

\begin{thm}
\label{thm:basicProperties} 
Let $0\leq s\leq1$ and
$\displaystyle
\ell_{s} := \bigg\{u:\Z_h\to\R \;:\;
\|u\|_{\ell_s}:= \sum_{m\in \Z} \frac{|u_{m}|}{(1+|m|)^{1+2s}}<\infty\bigg\}.
$
\begin{enumerate}[(a)]
\item \label{aa} For $0<s<1$ and $u\in\ell_{s}$ we have
\begin{equation} \label{eq:puntualdiscreta}
(-\Delta_h)^su_{j} = \frac{1}{h^{2s}} \sum_{m\neq j}
\big(u_{j}-u_{m}\big) K^s(j-m),
\end{equation}
where the discrete kernel  $K_s$ is given by
\begin{equation}
\label{eq:fractionalKernelOned}
K_s(m) = \frac{4^s\Gamma(1/2+s)\Gamma(|m|-s)}{\sqrt{\pi}|\Gamma(-s)|
\Gamma(|m|+1+s)},\quad
m\in\Z\setminus\{0\}.
\end{equation}
\item \label{bb} If $u\in\ell_0$ then
$\displaystyle \lim_{s\to0^+}(-\Delta_h)^su_{j}=u_{j}.
$
If $u\in\ell^\infty$ then
$\displaystyle \lim_{s\to1^-}(-\Delta_h)^su_{j}=-\Delta_hu_{j}.$
\item \label{cc} For $0<s<1$ there exists a positive constant $C_{s}$ such that
\begin{equation}
\label{eq:frKernelEstimate2}
K_s(m) \leq \frac{C_{s}}{|m|^{1+2s}}.
\end{equation}
\end{enumerate}
\end{thm}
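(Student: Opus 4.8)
The plan is to obtain the pointwise identity \eqref{eq:puntualdiscreta}--\eqref{eq:fractionalKernelOned} directly from the semigroup definition \eqref{definition}, and then to read off parts \ref{bb} and \ref{cc} from the explicit kernel. The starting point is an explicit description of the semidiscrete heat semigroup: taking the discrete Fourier transform on $[-\pi,\pi]$ (the symbol of $-\Delta_h$ being $\tfrac{2}{h^2}(1-\cos(h\xi))$) one gets
\[
e^{t\Delta_h}u_j=\sum_{m\in\Z}G(j-m,t)\,u_m,\qquad G(k,t)=\frac1{2\pi}\int_{-\pi}^{\pi}e^{-\frac{2t}{h^2}(1-\cos\theta)}e^{ik\theta}\,d\theta=e^{-2t/h^2}I_{|k|}(2t/h^2),
\]
where $I_\nu$ is the modified Bessel function of the first kind. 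For the argument I only need the structural facts $G(k,t)>0$, $\sum_{k}G(k,t)=1$, and, for the convergence bookkeeping, the elementary bound $|e^{t\Delta_h}u_j-u_j|\le\min\{2\|u\|_{\ell^\infty},\tfrac{4t}{h^2}\|u\|_{\ell^\infty}\}$ valid on bounded data.

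Using $\sum_m G(j-m,t)=1$ to write $u_j=\sum_m G(j-m,t)u_j$, the definition \eqref{definition} becomes $(-\Delta_h)^su_j=\tfrac1{\Gamma(-s)}\int_0^\infty\sum_{m\neq j}G(j-m,t)(u_m-u_j)\,\tfrac{dt}{t^{1+s}}$. The weighted hypothesis $u\in\ell_s$ together with the near-$t=0$ bound above makes this double integral absolutely convergent, so Fubini applies and
\[
(-\Delta_h)^su_j=\sum_{m\neq j}(u_j-u_m)\,\frac{-1}{\Gamma(-s)}\int_0^\infty G(j-m,t)\,\frac{dt}{t^{1+s}},
\]
which is already \eqref{eq:puntualdiscreta} with $\tfrac1{h^{2s}}K_s(j-m)=\tfrac{-1}{\Gamma(-s)}\int_0^\infty G(j-m,t)t^{-1-s}\,dt$. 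Inserting the Fourier representation of $G$ and using $\int_0^\pi\cos(k\theta)\,d\theta=0$ for $k\neq0$, one may subtract the constant $1$ inside the $\theta$-integral and then apply the classical identity $\int_0^\infty(e^{-at}-1)t^{-1-s}\,dt=\Gamma(-s)a^s$ ($0<s<1$, $a\ge0$). After the $h$-factors cancel this yields
\[
K_s(k)=-\frac{2^s}{\pi}\int_0^\pi(1-\cos\theta)^s\cos(k\theta)\,d\theta=-\frac{4^s}{\pi}\int_0^\pi(\sin(\theta/2))^{2s}\cos(k\theta)\,d\theta .
\]

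The trigonometric integral is a standard Beta-type integral: after $\theta=2\psi$ one is left with $\int_0^{\pi/2}(\sin\psi)^{2s}\cos(2k\psi)\,d\psi$, whose value in terms of Gamma functions is classical; combining it with Euler's reflection formula and the Legendre duplication formula $\Gamma(z)\Gamma(z+\tfrac12)=2^{1-2z}\sqrt\pi\,\Gamma(2z)$ collapses the expression to exactly \eqref{eq:fractionalKernelOned}, the sign cancellations involving $\cos(k\pi)$ and the reflection formula being precisely what makes $K_s(k)>0$. Part \ref{cc} is then immediate: from \eqref{eq:fractionalKernelOned} and the asymptotics $\Gamma(n-s)/\Gamma(n+1+s)\sim n^{-1-2s}$, the quantity $|m|^{1+2s}K_s(m)$ is continuous in $|m|\in[1,\infty)$ with a finite limit at infinity, hence bounded by a constant $C_s$.

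For part \ref{bb}, I would argue from the explicit formula \eqref{eq:puntualdiscreta}. As $s\to1^-$, the Gamma factors in \eqref{eq:fractionalKernelOned} force $K_s(\pm1)\to1$ and $K_s(m)\to0$ for $|m|\ge2$, while \ref{cc} gives a summable tail dominating the series uniformly for $s$ near $1$; passing to the limit termwise gives $(-\Delta_h)^su_j\to\tfrac1{h^2}\big((u_j-u_{j+1})+(u_j-u_{j-1})\big)=-\Delta_hu_j$. As $s\to0^+$, the key computation is $\sum_{m\neq0}K_s(m)=\tfrac{2^s}{|\Gamma(-s)|}\int_0^\infty\big(1-e^{-\tau}I_0(\tau)\big)\tau^{-1-s}\,d\tau$ (from $\sum_{m\in\Z}I_m(\tau)=e^\tau$); since this integral equals $\tfrac1s+O(1)$ while $\tfrac{2^s}{|\Gamma(-s)|}=\tfrac{2^s s}{\Gamma(1-s)}$, one gets $\sum_{m\neq0}K_s(m)\to1$, whereas $K_s(m)=O(s/|m|)$ forces $\sum_{m\neq j}u_mK_s(j-m)\to0$ for $u\in\ell_0$; hence $(-\Delta_h)^su_j=h^{-2s}\big(u_j\sum_{m\neq j}K_s(j-m)-\sum_{m\neq j}u_mK_s(j-m)\big)\to u_j$. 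The most delicate step is the one in the middle --- identifying $\int_0^\infty G(k,t)t^{-1-s}\,dt$ with \eqref{eq:fractionalKernelOned} via the trigonometric/Beta integral and the reflection and duplication formulas, together with the Fubini and $t\to0$ bookkeeping, which is exactly why the weighted space $\ell_s$ (rather than merely $\ell^\infty$) enters; everything else is Gamma-function asymptotics.
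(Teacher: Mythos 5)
Your proposal is correct, and while the overall architecture (semidiscrete heat kernel, stochastic completeness $\sum_k G(k,t)=1$, Fubini, explicit kernel, then Gamma asymptotics for the limits and the bound) is the same as the paper's, the central computation of $K_s$ is done by a genuinely different route. The paper evaluates $\int_0^\infty e^{-2t}I_{|m|}(2t)\,t^{-1-s}\,dt$ in one stroke from the tabulated Laplace-transform formula \eqref{eq:rusos}, whereas you pass to the Fourier side, use the subordination identity $\int_0^\infty(e^{-at}-1)t^{-1-s}\,dt=\Gamma(-s)a^s$ to exhibit $(-\Delta_h)^s$ as the multiplier $\big(\tfrac{2}{h^2}(1-\cos(h\xi))\big)^s$, and then evaluate the resulting trigonometric Beta integral via reflection and duplication; I checked that this does reproduce \eqref{eq:fractionalKernelOned} exactly. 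Your route is longer but buys a conceptually transparent reason for the positivity and the spectral meaning of the operator; the paper's is shorter given the reference. Your treatment of \eqref{bb} and \eqref{cc} is essentially the paper's (for $s\to0^+$ your bound $K_s(m)=O(s/|m|)$ against $u\in\ell_0$ is in fact a slightly cleaner way to kill the term $\sum_{m\ne j}u_mK_s(j-m)$ than the paper's dominated-convergence phrasing). One small caution: in the Fubini bookkeeping you invoke the bound $|e^{t\Delta_h}u_j-u_j|\le\min\{2\|u\|_{\ell^\infty},\tfrac{4t}{h^2}\|u\|_{\ell^\infty}\}$, but elements of $\ell_s$ need not be bounded; the correct justification (which you effectively also have) is that $\int_0^\infty G(k,t)t^{-1-s}\,dt\le C_s|k|^{-1-2s}$ paired with $u\in\ell_s$ gives absolute convergence of the double sum-integral, and one should also note (as the paper does via \eqref{eq:asymptotics-order-large}) that $e^{t\Delta_h}u_j$ itself converges for such $u$. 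This is a phrasing slip, not a gap.
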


Observe that, in view of \eqref{eq:puntualdiscreta}, the fractional discrete Laplacian is a nonlocal operator on $\Z_h$. Notice also that our definition is neither
a direct discretization of the pointwise formula for the fractional Laplacian, nor
a ``discrete analogue'', but the $s$-fractional power of the discrete Laplacian.

\begin{rem}[On the negative powers]
We recall here the nice work by E.~M. Stein and S.~Wainger,
see \cite{Stein-Wainger}, in which  they consider a fractional integral type operator on $\Z$ given by convolution with the kernel
$|m|^{-(1-2s)}$. It is interesting to notice that if we define, again with the help of the semigroup language,
the negative powers (or fractional integrals) $(-\Delta_h)^{-s}$ then the kernel $K_{-s}$ of this operator
satisfies $K_{-s}(m) \le C_s|m|^{-(1-2s)}$. Moreover
in the case $0<s< 1/2,$ there exist positive constants $c_s$ and $C_s$
such that $|K_{-s}(m)-c_s|m|^{-(1-2s)}| \le C_s|m|^{-(2-2s)}$, see Section~\ref{sec:dicreteFractional}.
\end{rem}

\begin{rem}[Probabilistic interpretation]
Let $u$ be a discrete harmonic function on $\Z_h$, that is,
$-\Delta_hu=0$. This is equivalent to
\[
u_{j} = \frac12u_{j+1} +\frac12 u_{j-1}.
\]
This mean value identity says that a
discrete harmonic function describes the movement of a particle that jumps either to the adjacent left
point or to the adjacent right point with probability $1/2$.
Suppose now that $u_j$ is a fractional discrete harmonic function, that is, $(-\Delta_h)^su_j=0$. Then
we have the following mean value identity:
\[
u_j=\frac{1}{\Sigma_s}\sum_{m\neq j}u_{m}K_s(j-m),
\]
where $\Sigma_s=\sum_{m\neq 0}K_s(m)=\frac{2^{2s}\Gamma(1/2+s)}{\sqrt{\pi}\,\Gamma(1+s)}$.
In a parallel way we understand this last identity as saying that a
fractional discrete harmonic function describes a particle
that is allowed to jump to any point on $\Z_h$ (not only to the adjacent ones)
and that the probability to jump from the point $j$ to the point $m$ is $\frac{1}{\Sigma_s}K_s(j-m)$.
As $s\to1^-$ the probability to jump from $j$ to a non adjacent point tends to zero, while the probability
to jump to an adjacent point tends to one, recovering in this way the previous situation. As $s\to0^+$,
the probability to jump to any point tends to zero, so there are no jumps.
\end{rem}

Formula \eqref{eq:puntualdiscreta} makes it possible to prove H\"older
estimates parallel to the corresponding estimates proved for the classical Laplacian
in~\cite{Silvestre-CPAM}. The result we get is the following.
For the definition of discrete H\"older spaces $C^{k,\beta}_h$
see Section~\ref{sec:ProofConsistency}.

\begin{thm}
\label{thm:Holder}
Let $0<\beta\le1$ and $0<s<1$.
\begin{enumerate}[(i)]
\item \label{(i)} Let $u\in C^{0,\beta}_h$ and $2s<\beta$. Then
$(-\Delta_h)^su\in C^{0,\beta-2s}_h$ and
$$\|(-\Delta_h)^su\|_{C^{0,\beta-2s}_h}\leq C\|u\|_{C^{0,\beta}_h}.$$
\item \label{(ii)} Let $u\in C^{1,\beta}_h$ and $2s<\beta$. Then
$(-\Delta_h)^su\in C^{1,\beta-2s}_h$ and
$$\|(-\Delta_h)^su\|_{C^{1,\beta-2s}_h}\leq C\|u\|_{C^{1,\beta}_h}.$$
\item \label{(iii)} Let $u\in C^{1,\beta}_h$ and $2s>\beta$. Then
$(-\Delta_h)^su\in C^{0,\beta-2s+1}_h$ and
$$\|(-\Delta_h)^su\|_{C^{0,\beta-2s+1}_h}\leq C\|u\|_{C^{1,\beta}_h}.$$
\item \label{(iv)} Let $u\in C^{k,\beta}_h$ and assume that $k+\beta-2s$ is not an integer, with $2s<k+\beta$. Then
$(-\Delta_h)^su\in C^{l,\gamma}_h$, where $l$ is the integer part of $k+\beta-2s$ and $\gamma=k+\beta-2s-l$.
\end{enumerate}
The constants $C>0$ above are independent of $h$ and~$u$.
\end{thm}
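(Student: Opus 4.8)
The proof rests on the pointwise formula \eqref{eq:puntualdiscreta} together with the decay bound \eqref{eq:frKernelEstimate2}. Since every function occurring in the statement is bounded — hence lies in $\ell_s$ — part (a) of Theorem~\ref{thm:basicProperties} gives $(-\Delta_h)^su_j=h^{-2s}\sum_{m\neq0}(u_j-u_{j+m})K_s(m)$, with $K_s$ even. Every bound is then produced by the same routine: split this series (or the analogous one for a difference $(-\Delta_h)^su_j-(-\Delta_h)^su_{j'}$) into a \emph{near} part and a \emph{far} part, with threshold $|m|\sim 1/h$ for the $\ell^\infty$ estimates and $|m|\sim|j-j'|$ for the seminorms; estimate the near terms by a finite difference of $u$ (using the Hölder data) and the far terms by plain values of $u$ (using boundedness); insert \eqref{eq:frKernelEstimate2}; and check that the powers of $h$ cancel. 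The exponents that come out are governed by $\sum_{0<|m|\le M}|m|^a\sim M^{a+1}$ for $-1<a<0$ and $\sum_{|m|>M}|m|^a\sim M^{a+1}$ for $a<-1$, which is exactly where the strict inequality between $2s$ and $\beta$ and the hypothesis $k+\beta-2s\notin\Z$ enter. Two reductions come first: since $\Delta_h$ is translation invariant on $\Z_h$, so is $(-\Delta_h)^s$, hence $D_+$ commutes with it, $D_+^l(-\Delta_h)^su=(-\Delta_h)^s(D_+^lu)$; combined with the elementary fact that $D_+^lu\in C^{k-l,\beta}_h$ when $u\in C^{k,\beta}_h$, this reduces (ii) to (i) applied to $D_+u$, and reduces (iv) — up to boundedness of the intermediate derivatives, which is a byproduct of the $\ell^\infty$ bounds below — to the single assertion that $(-\Delta_h)^s$ maps $C^{m,\beta}_h$ into $C^{0,m+\beta-2s}_h$ for $m=k-l$; the choice $l=\lfloor k+\beta-2s\rfloor$ forces $m\in\{0,1,2\}$ and $m+\beta-2s\in(0,1)$. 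So the real work is (i) (the case $m=0$), (iii) (the case $m=1$), and the $m=2$ analogue of (iii).

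For (i), so $u\in C^{0,\beta}_h$ and $2s<\beta$: splitting the series for $(-\Delta_h)^su_j$ at $|m|=1/h$, the near part is $\lesssim[u]_{C^{0,\beta}_h}h^\beta\sum_{0<|m|\le 1/h}|m|^{\beta-1-2s}\lesssim[u]_{C^{0,\beta}_h}h^{2s}$ (here $\beta-1-2s\in(-1,0)$ because $2s<\beta$), while the far part is $\lesssim\|u\|_{\ell^\infty}\sum_{|m|>1/h}|m|^{-1-2s}\lesssim\|u\|_{\ell^\infty}h^{2s}$; dividing by $h^{2s}$ gives $\|(-\Delta_h)^su\|_{\ell^\infty}\le C\|u\|_{C^{0,\beta}_h}$. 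For the seminorm, fix $j\neq j'$, set $N=j'-j$, $d=h|N|$; the case $d\ge1$ follows from the $\ell^\infty$ bound, and for $d<1$ one splits the series for $(-\Delta_h)^su_j-(-\Delta_h)^su_{j'}$ at $|m|=2|N|$, bounding the near bracket $(u_j-u_{j+m})-(u_{j'}-u_{j'+m})$ by $2[u]_{C^{0,\beta}_h}(h|m|)^\beta$, and — after rearranging it as $(u_j-u_{j'})-(u_{j+m}-u_{j'+m})$ — the far bracket by $2[u]_{C^{0,\beta}_h}d^\beta$; each of the two resulting sums is $\lesssim[u]_{C^{0,\beta}_h}h^{2s}d^{\beta-2s}$, which is the claim.

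Case (iii) (genuinely relevant only for $\beta<2s<1+\beta$) runs the same way after symmetrizing via the evenness of $K_s$: $(-\Delta_h)^su_j=h^{-2s}\sum_{m\ge1}\delta^2_mu_j\,K_s(m)$ with $\delta^2_mu_j:=2u_j-u_{j+m}-u_{j-m}$, and (by telescoping) $|\delta^2_mu_j|\le C[D_+u]_{C^{0,\beta}_h}(hm)^{1+\beta}$, while trivially $|\delta^2_mu_j|\le 4\|u\|_{\ell^\infty}$. The $\ell^\infty$ bound is obtained by splitting at $|m|=1/h$ as before, using the first estimate on the near part (now $\lesssim h^{1+\beta}\sum_{m\le 1/h}m^{\beta-2s}\lesssim h^{2s}$ since $\beta-2s\in(-1,0)$) and the second on the far part; it holds for every $s\in(0,1)$ with $2s<1+\beta$, which is also what the intermediate-derivative boundedness in (iv) needs. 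For the seminorm, with $d=h|N|<1$, split the series for $\sum_{m\ge1}(\delta^2_mu_j-\delta^2_mu_{j'})K_s(m)$ at $|m|=2|N|$: on the near part use $|\delta^2_mu_j-\delta^2_mu_{j'}|\le|\delta^2_mu_j|+|\delta^2_mu_{j'}|\le C[D_+u]_{C^{0,\beta}_h}(hm)^{1+\beta}$; on the far part use the essential observation that the sequence $i\mapsto\delta^2_mu_i$ has discrete derivative $\delta^2_m(D_+u)_i$, whence $\|D_+(\delta^2_mu)\|_{\ell^\infty}\le 2[D_+u]_{C^{0,\beta}_h}(hm)^\beta$ and therefore $|\delta^2_mu_j-\delta^2_mu_{j'}|\le 2d\,[D_+u]_{C^{0,\beta}_h}(hm)^\beta$. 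This last bound, with its extra factor $m^{-1}$ compared to the naive one, is precisely what makes the tail $\sum_{m>2|N|}m^{\beta-1-2s}$ convergent when $\beta<2s$, and the two pieces combine to $|(-\Delta_h)^su_j-(-\Delta_h)^su_{j'}|\le C[D_+u]_{C^{0,\beta}_h}d^{1+\beta-2s}$. The $m=2$ case of (iv) is the same argument verbatim with a suitable symmetric third-order difference in place of $\delta^2_mu_j$, the $C^{2,\beta}_h$ regularity supplying the extra cancellation.

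The one point that is not mechanical is this far-part estimate in (iii) (and its third-order counterpart): bounding $\delta^2_mu_j-\delta^2_mu_{j'}$ by the sizes of $\delta^2_mu_j$ and $\delta^2_mu_{j'}$ separately only yields $(hm)^{1+\beta}$ decay and a divergent tail once $\beta<2s$, so one must instead exploit the smoothness of $u$ in the base index rather than in the increment, as above. Everything else is bookkeeping — keeping track of the cancellation of powers of $h$ uniformly across $2s\le1$ versus $2s>1$ and $d\ge1$ versus $d<1$, and noting that the borderline summations are controlled by the hypotheses.
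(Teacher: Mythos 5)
Your proposal is correct and follows essentially the same route as the paper: the pointwise formula \eqref{eq:puntualdiscreta} with the decay \eqref{eq:frKernelEstimate2}, a split of the difference $(-\Delta_h)^su_j-(-\Delta_h)^su_{j'}$ at $|m|\sim|j-j'|$ with the near part controlled by H\"older increments and the far part by the variation in the base index, and, in case \textit{(iii)}, the evenness of $K_s$ to kill the first-order term --- your symmetrization $\sum_{m\ge1}\delta^2_m u_j\,K_s(m)$ is just the paper's identity $\sum_{1\le|m|\le|k-j|}mK_s(m)=0$ packaged differently. The only substantive additions are that you also prove the $\ell^\infty$ bounds (which the stated norms require but the paper leaves implicit) and that you correctly isolate the second-order case $m=2$ hidden behind the paper's one-line ``by iteration'' for \textit{(iv)}; there, note that the near part cannot be handled ``verbatim'' by the triangle inequality (which only yields $(hm)^2$ and the wrong power $d^{2-2s}$) but needs the decomposition $\delta^2_m v_j-\delta^2_m v_{j'}=-h^2m^2\bigl[(D_+^2v)_j-(D_+^2v)_{j'}\bigr]+O([D_+^2v]_{C^{0,\beta}_h}(hm)^{2+\beta})$, exactly the ``extra cancellation'' you allude to.
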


Although the proof of Theorem~\ref{thm:consistencia1d}\eqref{(iCon)} is not trivial,
one could say in a na\"ive way that such an approximation result
is in some sense announced by the results in Theorem~\ref{thm:Holder}. Indeed, 
the fractional discrete Laplacian maps $C^\alpha_h$ into $C^{\alpha-2s}_h$. The continuous version
of this property is also true for the fractional Laplacian, so the restriction of $(-\Delta)^su$
to the mesh $\Z_h$ is in $C^{\alpha-2s}_h$
whenever $u\in C^\alpha(\R)$.

So far the results shown above were developed in the one dimensional case. Analogous results to Theorem~\ref{thm:basicProperties} and Theorem~\ref{thm:Holder} can be extended to higher dimensions. These multidimensional results will appear elsewhere. Our techniques depend strongly on the explicit expression for the kernel of the fractional operators. Nevertheless, we cannot find such explicit expressions for the kernels in the two
(or higher) dimensional case. This yields technical difficulties that we cannot overcome to prove a multidimensional version of Theorem~\ref{thm:consistencia1d}. However, we have been able to get asymptotics for such kernels
in dimension two that we believe could also be reproduced for the multidimensional case.

At the end of the paper we will show some pictures of examples drawn with \textit{Mathematica} that illustrate
our fractional discrete operators, both in one and two dimensions. We take examples of functions $u$
in the continuous variable for which $f:=(-\Delta)^su$
is explicitly known. We plot $f$ together with the discrete function $f_j=(-\Delta_h)^s(r_hu)_j$.
We also consider some explicit solutions $u=(-\Delta)^{-s}f$ for given~$f$.
We compare the graph of $u$ with the solution $u_j=(-\Delta_h)^{-s}(r_hf)_j$.
In this regard, we are addressing the question of whether the Poisson problem for the fractional Laplacian
$$(-\Delta)^su=f,\quad\hbox{in}~\R^n,$$
can be discretized by using our formulas, and whether the solutions to this discretized problem converge in some sense to the solutions of the continuous Poisson problem.

Several authors have been interested in solving the fractional Poisson equation
in a numerical or discrete way.
We mention here the recent works \cite{Bonito-Pasciak, DelTeso, Huang-Oberman, Nochetto},
see also the references therein, for the numerical approach, and
\cite{Zoia} for a one-dimensional discrete approach.

As we remarked earlier, the strategy used to obtain our results is the language of semigroups. Since the
semidiscrete heat semigroup is given in terms of modified Bessel functions, see Section~\ref{sec:proofBasic},
we will use exhaustively some properties and facts about these functions.

The structure of the paper is as follows.
Section~\ref{sec:proofBasic} is devoted to the proofs of Theorems~\ref{thm:basicProperties} and~\ref{thm:Holder}.
The proof of Theorem~\ref{thm:consistencia1d} is included in Section~\ref{sec:ProofConsistency}.
In Section~\ref{sec:dicreteFractional} we analyze the negative powers $(-\Delta_h)^{-s}$. The asymptotics for the kernel of the fractional discrete Laplacian and fractional integral in dimension two are studied in Section~\ref{sec:Mellin}.
In Sections~\ref{sec:oneDim} and~\ref{sec:twoDim} we show our pictures. Finally, some technical proofs
needed on the way and the properties of the Bessel functions are collected in Section~\ref{sec:technical}.
By $C_s$ we mean a positive constant depending on $s$ that may change in each occurrence, while
by $C$ we will denote a constant independent of the significant variables.

\section{The fractional discrete Laplacian: proofs of Theorem~\ref{thm:basicProperties} and Theorem~\ref{thm:Holder}}
\label{sec:proofBasic}

Given $u_j:\Z_h\to\R$, the solution to the semidiscrete heat equation with
initial discrete temperature $u_j$ can be written as
\begin{equation}
\label{semigrupo}
e^{t\Delta_h}u_{j} = \sum_{m\in \Z}G\big(j-m,\tfrac{t}{h^2}\big)u_{m}=
\sum_{m\in \Z}G\big(m,\tfrac{t}{h^2}\big)u_{j-m}, \quad t\in [0,\infty),
\end{equation}
where the semidiscrete heat kernel $G$ is defined as
$$G(m,t) = e^{-2t} I_{m}(2t),\quad m\in \Z.$$
This follows by scaling from the case $h=1$ of \cite{FiveGuys,GrIl}.
Here, $I_\nu$ is the modified Bessel function of order $\nu$ whose properties are collected in Section \ref{sec:technical}.
By \eqref{eq:negPos} and \eqref{eq:Ik>0} the kernel $G(m,t)$ is
symmetric in $m$, that is, $G(m,t)=G(-m,t)$, and positive.

\begin{proof}[Proof of Theorem~\ref{thm:basicProperties}]

First we will check that $e^{t\Delta_h}u_{j}$ is well defined. Indeed, if $N>0$, by using the asymptotic of the Bessel function for large order~\eqref{eq:asymptotics-order-large}, then
\begin{align*}
\sum_{|m|>N}G\big(m,\tfrac{t}{h^2}\big)|u_{j-m}| &\le
Ce^{-2t} \sum_{|m-j|>N} \frac{t^{|m|}(1+|m-j|)^{1+2s}}
{\Gamma(|m|+1)} \frac{|u_{m-j}|}{(1+|m-j|)^{1+2s}}
\leq C_{t,s,N,j}\|u\|_{\ell_s}.
\end{align*}

\noindent\textit{(a).} Define
\begin{equation} \label{eq:kernelFractionalLaplacian}
K_s(m) = \frac1{|\Gamma(-s)|} \int_0^\infty G(m,t) \frac{dt}{t^{1+s}}
\end{equation}
for $m\neq 0$, and $K_s(0) =0$. The symmetry of this kernel in $m$ follows from the symmetry of $G(m,t)$.
Therefore it is enough to assume that $m\in \N$. To get~\eqref{eq:fractionalKernelOned}, we use \eqref{eq:rusos} with $c=2$ and $\nu=m$. On the other hand, it is easy to show (see for example \cite{FiveGuys} for the case $h=1$) that
 $e^{t\Delta_h}1\equiv1$. Hence, from \eqref{definition} and~\eqref{semigrupo},
\begin{align*}
(-\Delta_h)^su_{j}
&= \frac{1}{\Gamma(-s)} \int_0^\infty \sum_{m\neq j}
G\big(j-m,\tfrac{t}{h^2}\big)(u_{m}-u_{j}) \frac{dt}{t^{1+s}} \\
&= \frac{1}{\Gamma(-s)} \sum_{m\neq j} (u_{m}-u_{j})\int_0^\infty
G\big(j-m,\tfrac{t}{h^2}\big) \frac{dt}{t^{1+s}} \\
&= \frac{1}{h^{2s}|\Gamma(-s)|} \sum_{m\neq j}(u_{j}-u_{m})
\int_0^\infty G(j-m,r) \frac{dr}{r^{1+s}} \\
&= \frac{1}{h^{2s}} \sum_{m\neq j}(u_{j}-u_{m})K_{s}(j-m).
\end{align*}
For the interchange of summation and integration in the second equality, we consider the two terms
\[
\int_0^\infty \sum_{m\neq j}
G\big(j-m,\tfrac{t}{h^2}\big)|u_{m}| \frac{dt}{t^{1+s}}
+|u_{j}| \int_0^\infty \sum_{m\neq j}
G\big(j-m,\tfrac{t}{h^2}\big)\frac{dt}{t^{1+s}}.
\]
By using \eqref{eq:frKernelEstimate2} we see that
the first term above is bounded
by $C_{s}\sum_{m\neq j}|m-j|^{-(1+2s)}|u_{m}|$, which is finite for each $j$
because $u\in \ell_{s}$. For the second term we use again~\eqref{eq:frKernelEstimate2}.

\noindent\textit{(b).} Suppose first that $u\in\ell_0$. We have
\[
h^{2s}(-\Delta_h)^su_{j} =
u_{j} \sum_{m\neq j} K_s(j-m) - \sum_{m\neq j}K_s(j-m)u_{m} =: u_{j}T_1-T_2.
\]
We write $T_1=T_{1,1}+T_{1,2}$, where
\[
T_{1,2} = \frac{1}{|\Gamma(-s)|} \sum_{m\neq j} \int_1^\infty G(j-m,t)\,\frac{dt}{t^{1+s}}=
\frac{1}{|\Gamma(-s)|} \sum_{m\neq 0} \int_1^\infty G(m,t)\,\frac{dt}{t^{1+s}}.
\]
We are going to prove that $T_{1,1}$ and $T_2$ tend to zero, while $T_{1,2}$ tends to 1,
as $s\to0^+$. Let us begin with $T_{1,2}$. By adding and subtracting the term $m=0$ in the sum
and using~\eqref{eq:sumIk}, we get
\[
T_{1,2} = \frac{1}{|\Gamma(-s)|}\left(\frac{1}{s}-\int_1^\infty \frac{e^{-2t} {I_0(2t)}}{t^{1+s}}\,dt\right).
\]
By noticing that $|\Gamma(-s)|s=\Gamma(1-s)$ and that, by~\eqref{eq:asymptotics-infinite}, we have
\[
\frac{1}{|\Gamma(-s)|} \int_1^\infty \frac{e^{-2t} {I_0(2t)}}{t^{1+s}}\,dt
\le \frac{C}{|\Gamma(-s)|} \int_{1}^{\infty}t^{-1/2-1-s}\,dt
= \frac{C}{|\Gamma(-s)|(1/2+s)},
\]
we get $T_{1,2}\to 1$ as $s\to0^+$, as desired. Next we handle the other two terms $T_{1,1}$ and $T_2$.
On one hand, by~\eqref{eq:asymptotics-zero-ctes},
\begin{align*}
T_{1,1} &\sim \frac{1}{|\Gamma(-s)|} \sum_{m\neq 0}
\frac{1}{\Gamma(|m|+1)} \int_0^1e^{-2t}t^{|m|}
\,\frac{dt}{t^{1+s}}
\le \frac{1}{|\Gamma(-s)|}
\sum_{m\neq 0} \frac{1}{\Gamma(|m|+1)} \frac{1}{|m|-s},
\end{align*}
and the last quantity tends to $0$ as $s\to 0^+$. On the other hand,
for $T_2$, we use~\eqref{eq:fractionalKernelOned} to obtain
\[
|T_2|\le \frac{C_{s}}{|\Gamma(-s)|} \sum_{m\neq j}
\frac{\Gamma(|j-m|-s)}{\Gamma(|j-m|+1+s)}|u_{m}|.
\]
The constant $C_{s}$ remains bounded as $s\to0^+$.
Since $u\in \ell_0$, by dominated convergence, we have that the sum
above is bounded by $\|u\|_{\ell_0}$, for each $j\in \Z$. Therefore $T_2$ tends to $0$ as $s\to 0^+$.

For the second limit, suppose that $u\in\ell^\infty$. By using the symmetry of the kernel $K_s$ we can write
\[
h^{2s}(-\Delta_h)^s u_{j} = S_1+S_2,
\]
where
\[
S_1 = K_s(1) \bigl(-u_{j+1}+2u_{j}-u_{j-1}\bigr),\quad\text{and}\quad
S_2 = \sum_{|m|>1}K_s(m)(u_j-u_{j-m}).
\]
Next we show that $K_s(1)\to1$, while $S_2\to0$, as $s\to1^-$, which would give the conclusion. By~\eqref{eq:fractionalKernelOned} we have
\[
\lim_{s\to 1^-}K_s(1) = \lim_{s\to 1^-} \frac{4^s\Gamma(1-s)\Gamma(1/2+s)}{\sqrt{\pi}|\Gamma(-s)|\Gamma(2+s)}
= \lim_{s\to 1^-}\frac{4^s\Gamma(1/2+s)}{\sqrt{\pi}\Gamma(2+s)}
= \frac{4\Gamma(3/2)}{\sqrt{\pi}\Gamma(3)}=1.
\]
On the other hand, by~\eqref{eq:fractionalKernelOned}, $S_2$ is bounded by
\begin{align*}
2\|u\|_{\ell^\infty}\sum_{|m|>1}K_s(m)\le 2\|u\|_{\ell^\infty}
\frac{4^s\Gamma(\frac{1}{2}+s)}{\pi^{1/2}|\Gamma(-s)|}
\sum_{|m|>1}^{\infty}
\frac{\Gamma(|m|-s)}{\Gamma(|m|+1+s)},
\end{align*}
which goes to zero as $s\to1^-$.

\noindent\textit{(c).} Observe that the estimate in \eqref{eq:frKernelEstimate2} follows from \eqref{eq:fractionalKernelOned} and
Lemma~\ref{eq:lowerBound}.
\end{proof}

Consider the first order difference operators on $\Z_h$ given as (observe that we already introduced some definitions in the statement of Theorem \ref{thm:consistencia1d}).
\[
D_+u_{j}:=\frac{u_{j+1}-u_{j}}{h},\quad\text{and}\quad
D_-u_{j}:=\frac{u_{j}-u_{j-1}}{h}.
\]
For $\gamma, \eta\in \N_0$,
we define $D_{+,-}^{\gamma,\eta}u_j:=D_{+}^{\gamma}
D_{-}^{\eta}u_j$. Here, by $D_{\pm}^{k}$ we mean that we apply $k$ times the operator~$D_{\pm}$
and $D_{\pm}^0$ is the identity operator.

\begin{defn}[Discrete H\"older spaces]
Let $0<\beta\le1$ and $k\in \N_0$. A bounded
function $u:\Z_h\to\R$ belongs to the discrete H\"older space $C_h^{k,\beta}$ if
\[
[D_{+,-}^{\gamma,\eta}u]_{C_h^{0,\beta}} :=
\sup_{m\neq j} \frac{|D_{+,-}^{\gamma,\eta}u_{j}-
D_{+,-}^{\gamma,\eta} u_{m}|}{h^{\beta}|j-m|^{\beta}}\le C<\infty
\]
for each pair $\gamma,\eta\in \N_0$ such that $\gamma+\eta=k$.
The norm in the spaces $C_h^{k,\beta}$ is defined in the usual way:
\[
\|u\|_{C_h^{k,\beta}} := \max_{\gamma+\eta\le k}\sup_{m\in \Z_h}|D_{+,-}^{\gamma,\eta}u_m|
+\max_{\gamma+\eta=k}[D_{+,-}^{\gamma,\eta}u]_{C_h^{0,\beta}}.
\]
\end{defn}

\begin{proof}[Proof of Theorem~\ref{thm:Holder}]
By iteration of \textit{(i)}, \textit{(ii)} and \textit{(iii)}
we can get~\textit{(iv)}. Let us start with the proof of~\textit{(i)}.
Let $k,j\in \Z$. We have
\begin{equation}
\label{diferenciaS}
|(-\Delta_h)^su_{k}-(-\Delta_h)^su_{j}|= \frac{1}{h^{2s}}|S_1+S_2|,
\end{equation}
where
\begin{equation}
\label{S1}
S_1 := \sum_{1\leq|m|\le|k-j|}\big(u_{k}-u_{k+m}-u_{j}+u_{j+m}\big)K_s(m),
\end{equation}
and $S_2$ is the rest of the sum over $|m|>|k-j|$.
By~\eqref{eq:frKernelEstimate2},
\[
S_1 \le C_{s}[u]_{C_h^{0,\beta}}h^{\beta} \sum_{1\leq|m|\le|k-j|}
\frac{|m|^{\beta}}{|m|^{1+2s}} \le C_{s}
[u]_{C_h^{0,\beta}}h^{\beta}|k-j|^{\beta-2s}.
\]
For $S_2$ we use that $|u_{k}-u_{j}|\le [u]_{C_h^{0,\beta}}h^{\beta}|k-j|^{\beta}$ and
\eqref{eq:frKernelEstimate2} again to get
\[
S_2\le C_{s}[u]_{C_h^{\beta}}h^{\beta}|k-j|^{\beta} \sum_{|m|>|k-j|}|m|^{-1-2s}\le C_{s}
[u]_{C_h^{0,\beta}}h^{\beta}|k-j|_2^{\beta-2s}.
\]

For \textit{(ii)}, it suffices to prove that $D_{\pm }(-\Delta_h)^su\in C^{0,\beta-2s}_h$.
For this it is enough to observe that $D_{\pm }$
commutes with $(-\Delta_h)^s$ and then use \textit{(i)}.

For \textit{(iii)} we are going to use~\eqref{diferenciaS}. Without loss of generality, take $m\in \N$. We split the sum in \eqref{diferenciaS}
by taking the terms $u_{k}-u_{k+m}$ and $u_j-u_{j+m}$ separately.
The following computation works for both terms, so we do it only for the first one. It is verified that
\begin{equation}
\label{eq:discreteTVM}
u_{k+m}-u_{k}=h\sum_{\gamma=0}^{m-1}D_+u_{k+\gamma}.
\end{equation}
Therefore,
\begin{equation}
\label{vaso}
u_{k}-u_{k+m}=\Big(hmD_{+}u_{k}-
h\sum_{\gamma=0}^{m-1}D_+u_{k+\gamma}\Big)
-hmD_{+}u_{k}.
\end{equation}
On one hand, by taking into account that the kernel $K_s(m)$ is even, we get
\begin{equation}
\label{eq:nulo}
\sum_{1\leq|m|\le|k-j|}(hm D_{+}u_{k})K_s(m)
=hD_{+}u_{k}\sum_{1\leq|m|\le|k-j|}mK_s(m)=0.
\end{equation}
On the other hand, since $u\in C_h^{1,\beta}$,
the first term in the right hand side of \eqref{vaso} is bounded by
\begin{equation}
\label{eq:truco}
h\sum_{\gamma=0}^{m-1}\big|D_{+}u_{k}-D_{+}u_{k+\gamma}\big|\le h^{1+\beta}[u]_{C^{1,\beta}_h}
\sum_{\gamma=0}^{m-1}|\gamma|^{\beta}\le h^{1+\beta}[u]_{C^{1,\beta}_h}|m|^{\beta}|m|=[u]_{C^{1,\beta}_h}(h|m|)^{1+\beta}.
\end{equation}
Pasting \eqref{eq:nulo} and~\eqref{eq:truco} (and their analogous for $u_{j}-u_{j+m}$)
into~\eqref{S1}, we conclude that
\[
|S_1|\le C_{s}[u]_{C^{1,\beta}_h}h^{1+\beta}\sum_{1\leq|m|\le|k-j|}
\frac{|m|^{1+\beta}}{|m|^{1+2s}}
\le C_{s}[u]_{C^{1,\beta}_h}(h|k-j|)^{1+\beta-2s}.
\]
Now we deal with $S_2$. By~\eqref{eq:discreteTVM},
\begin{align*}
\big|(u_{k}-u_{j})-(u_{k+m}-u_{j+m})\big|&=\big|(u_{(k-j)+j}-u_{j})-(u_{(k-j)+(j+m)}-u_{j+m})\big|\\
&\le h\sum_{\gamma=0}^{k-j-1}|D_{+}u_{j+\gamma}-D_{+}u_{j+m+\gamma}|\le [u]_{C^{1,\beta}_h}h^{1+\beta}|m
|^{\beta}|k-j|.
\end{align*}
Hence,
\[
|S_2| \le C[u]_{C^{1,\beta}_h}h^{1+\beta}|k-j| \sum_{|m|>|k-j|}|m|^{\beta}K_s(m)\le C[u]_{C^{1,\beta}_h}h^{1+\beta}|k-j|^{1+\beta-2s}.
\qedhere
\]
\end{proof}

\section{Error of approximation: Proof of Theorem~\ref{thm:consistencia1d}}
\label{sec:ProofConsistency}

Recall that a continuous real function $u$ belongs to the H\"older space $C^{k,\beta}(\R)$ if $u\in C^k(\R)$ and
\[
[u^{(k)}]_{C^{0,\beta}(\R)} := \sup_{\substack{x,y\in \R\\x\neq y}} \frac{|u^{(k)}(x)-u^{(k)}(y)|}{|x-y|^{\beta}}<\infty,
\]
where we are using $u^{(k)}$ to denote the $k$-th derivative of~$u$. The norm in the spaces $C^{k,\beta}(\R)$ is
\[
\|u\|_{C^{k,\beta}(\R)} := \sum_{l=0}^k\|u^{(l)}\|_{L^\infty(\R)}+[u^{(k)}]_{C^{0,\beta}(\R)}.
\]
In order to prove Theorem~\ref{thm:consistencia1d} we need a preliminary lemma. Let us define
the following constant:
\begin{equation}
\label{eq:constante1d}
c_{s} := \frac{4^s\Gamma(1/2+s)}{\pi^{1/2}|\Gamma(-s)|}>0.
\end{equation}

\begin{lem}
\label{lem:compa}
Let $0<s<1$. Given $j\in \Z$, we have
\begin{equation}
\label{eq:lem1}
\bigg|c_s\int_{|y-h(j+m)|<h/2}\frac{dy}{|hj-y|^{1+2s}}-\frac{K_s(m)}{h^{2s}}\bigg|\le \frac{C_s}{h^{2s}m^{2+2s}}, \quad \text{for all }\, m\in \Z,
\end{equation}
\begin{equation}
\label{eq:lem2}
\int_{|y-h(j+m)|<h/2}\frac{dy}{|hj-y|^{1+2s}}\le \frac{C_s}{h^{2s}m^{1+2s}},\quad \text{for all }\, m\in \Z,
\end{equation}
and
\begin{equation}
\label{eq:lem3}
\sum_{m\in \Z}\int_{|y-h(j+m)|<h/2}\frac{hj-y}{|hj-y|^{1+2s}}\,dy = 0.
\end{equation}
\end{lem}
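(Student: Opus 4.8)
The plan is to scale everything to the case $h=1$. For fixed $j$ the substitution $y=h(j+z)$ turns $|hj-y|$ into $h|z|$, $\dis y$ into $h\,\dis z$, and the set $\{|y-h(j+m)|<h/2\}$ into $\{|z-m|<1/2\}$, so that
\[
\int_{|y-h(j+m)|<h/2}\frac{\dis y}{|hj-y|^{1+2s}}=\frac{1}{h^{2s}}\int_{|z-m|<1/2}\frac{\dis z}{|z|^{1+2s}}.
\]
Since $K_s$ is even (by \eqref{eq:fractionalKernelOned}) and $|z|^{-1-2s}$ is even in $m$, it suffices to treat $m\ge1$; the case $m=0$ makes \eqref{eq:lem1}--\eqref{eq:lem2} trivial (both sides are $+\infty$), while the $m=0$ term in \eqref{eq:lem3}, read as a principal value, is $0$ by oddness. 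Then \eqref{eq:lem2} is immediate: on $\{|z-m|<1/2\}$ one has $|z|\ge m-\tfrac12\ge\tfrac m2$, hence $\int_{|z-m|<1/2}|z|^{-1-2s}\,\dis z\le 2^{1+2s}m^{-1-2s}$.

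For \eqref{eq:lem1} I would write $K_s(m)=c_s\,\dfrac{\Gamma(m-s)}{\Gamma(m+1+s)}$, combining \eqref{eq:fractionalKernelOned} and \eqref{eq:constante1d}, so that after dividing by $c_s$ the task reduces to
\[
\Big|\int_{m-1/2}^{m+1/2}\frac{\dis z}{z^{1+2s}}-\frac{\Gamma(m-s)}{\Gamma(m+1+s)}\Big|\le \frac{C_s}{m^{2+2s}}.
\]
I would estimate each of the two quantities against the common value $m^{-(1+2s)}$ and apply the triangle inequality. On the integral side, the mean value theorem gives $\bigl|z^{-1-2s}-m^{-1-2s}\bigr|\le(1+2s)\,2^{2+2s}\,m^{-2-2s}\,|z-m|$ on the interval, and integrating yields $\bigl|\int_{m-1/2}^{m+1/2}z^{-1-2s}\,\dis z-m^{-1-2s}\bigr|\le C_s m^{-2-2s}$ (one can sharpen this to $m^{-3-2s}$ by a midpoint/symmetry argument, but that is not needed). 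On the Gamma side, one uses the classical asymptotics $\Gamma(m+a)/\Gamma(m+b)=m^{a-b}\bigl(1+O(m^{-1})\bigr)$ with $a=-s$, $b=1+s$, which gives $\Gamma(m-s)/\Gamma(m+1+s)=m^{-(1+2s)}+O(m^{-(2+2s)})$; in fact the coefficient $\tfrac{(a-b)(a+b-1)}{2}$ of the $1/m$ term vanishes here since $(-s)+(1+s)-1=0$, so the error is really $O(m^{-(3+2s)})$.

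The identity \eqref{eq:lem3} is purely a symmetry statement. After the same scaling it reads $\sum_{m\in\Z}\int_{|z-m|<1/2}\dfrac{-z}{|z|^{1+2s}}\,\dis z=0$, and the integrand $g(z)=-z|z|^{-1-2s}$ is odd. Hence $z\mapsto-z$ carries the $m$-th term to the negative of the $(-m)$-th term, and the $m=0$ term vanishes in the principal value sense; equivalently, for every $N$ the symmetric partial sum equals $\PV\int_{-N-1/2}^{N+1/2}g(z)\,\dis z=0$, so the series is $0$. (For $2s>1$ it converges absolutely and no principal value is needed; for $2s\le1$ the outer sum is read symmetrically, exactly as the $\pm m$ pairing is used later on.)

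The only ingredient that is not completely elementary is the quotient-of-Gamma-functions asymptotic with an honest, $s$-dependent constant bounding the $O(m^{-(2+2s)})$ error. I would either quote the corresponding estimate recorded among the Bessel/Gamma facts in Section~\ref{sec:technical}, or derive it directly from Stirling's formula with explicit remainder, or from the Beta-integral representation $\Gamma(m-s)/\Gamma(m+1+s)=\Gamma(1+2s)^{-1}\int_0^1 t^{m-s-1}(1-t)^{2s}\,\dis t$ together with a Watson-type estimate; everything else in the lemma is routine.
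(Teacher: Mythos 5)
Your proposal is correct and follows essentially the same route as the paper: rescale to $h=1$, prove \eqref{eq:lem2} by the pointwise bound $|z|\ge m/2$ on the cell, prove \eqref{eq:lem1} by a triangle inequality against the common value $m^{-(1+2s)}$ (mean value theorem on the integral side, a quantitative Gamma-ratio estimate on the kernel side), and prove \eqref{eq:lem3} by the odd-symmetry pairing $m\leftrightarrow -m$. The Gamma-ratio estimate you defer to is exactly Lemma~\ref{eq:lowerBound} in Section~\ref{sec:technical}, which the paper proves via the Tricomi--Erd\'elyi integral representation — essentially the Beta-integral route you mention.
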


\begin{proof}
The change of variable $hj-y=hz$ produces
\begin{multline*}
\bigg|\frac{c_s}{h^{2s}}\int_{|z-m|<1/2}\frac{dz}{|z|^{1+2s}}-\frac{K_s(m)}{h^{2s}}\bigg| \\
\le \bigg|\frac{c_s}{h^{2s}}\int_{|z-m|<1/2} \bigg(\frac{1}{|z|^{1+2s}}-\frac{1}{|m|^{1+2s}}\bigg)\,dz\bigg|
+ h^{-2s}\bigg|\frac{c_s}{|m|^{1+2s}}-K_s(m)\bigg|.
\end{multline*}
By using the Mean Value Theorem,
\[
\bigg|\int_{|z-m|<1/2} \bigg(\frac{1}{|z|^{1+2s}}-\frac{1}{|m|^{1+2s}}\bigg)\,dz\bigg|
\le C_s\bigg|\int_{|z-m|<1/2}\frac{dz}{|m|^{2+2s}}\bigg| = \frac{C_s}{|m|^{2+2s}},
\]
while by Lemma~\ref{eq:lowerBound},
\[
\bigg|\frac{c_s}{|m|^{1+2s}}-K_s(m)\bigg|\le \frac{C_s}{|m|^{2+2s}}.
\]
Thus \eqref{eq:lem1} follows. For \eqref{eq:lem2}, it is easy to see that
\[
\int_{|y-(h(j+m))|<h/2}\frac{dy}{|hj-y|^{1+2s}}
\le C_s\int_{|y-(h(j+m))|<h/2}\frac{dy}{|hm|^{1+2s}} = \frac{C_s}{h^{2s}|m|^{1+2s}}.
\]
Finally, let us prove~\eqref{eq:lem3}. By symmetry, we have
\[
\int_{|y-hj|<h/2}\frac{(hj-y)}{|hj-y|^{1+2s}}\,dy = 0.
\]
Moreover, by changing variables $hj-y=z$, we get
\begin{align*}
\sum_{\substack{m\in \Z\\ m\neq 0}} 
\int_{|z-hm|< h/2} \frac{z}{|z|^{1+2s}}\,dz
&= \sum_{\substack{\ell\in \Z\\ \ell\neq 0}}
\int_{|z+h\ell|< h/2} \frac{z}{|z|^{1+2s}}\,dz
= \sum_{\substack{\ell\in \Z\\ \ell\neq 0}}
\int_{|u-h\ell|< h/2} \frac{-u}{|u|^{1+2s}}\,du,
\end{align*}
and the conclusion readily follows.
\end{proof}

\begin{proof}[Proof of Theorem~\ref{thm:consistencia1d}]
\textit{(i)}. We write, for each $j\in \Z$,
\begin{align*}
r_h\big((-\Delta)^su\big)_j
&= c_{s}\sum_{m\in \Z}\int_{|y-h(j+m)|< h/2} \frac{u(hj)-u(y)}{|hj-y|^{1+2s}}\,dy\\
&= c_{s}\bigg(\sum_{m\in \Z}\int_{|y-h(j+m)|< h/2} \frac{u(h(j+m))-u(y)}{|hj-y|^{1+2s}}\,dy\\
&\qquad\quad + \sum_{\substack{m\in \Z\\ m\neq 0}} \big(u(hj)-u(h(j+m))\big)
\int_{|y-h(j+m)|< h/2} \frac{dy}{|hj-y|^{1+2s}}\bigg)\\
& =: c_s(S_1+S_2).
\end{align*}
By using that $u\in C^{0,\beta}(\R)$ and~\eqref{eq:lem2}, we have
\begin{align*}
|S_1| &\le C[u]_{C^{0,\beta}(\R)} \sum_{m\in \Z} \int_{|y-h(j+m)|< h/2} \frac{h^{\beta}\,dy}{|hj-y|^{1+2s}} \\
&\le C_s [u]_{C^{0,\beta}(\R)}h^{\beta}\sum_{m\in \Z} \frac{1}{h^{2s}|m|^{1+2s}}
= C_s[u]_{C^{0,\beta}(\R)}h^{\beta-2s}.
\end{align*}
Now we compare $S_2$ with $(-\Delta_h)^s(r_hu)_j$.
Since $u\in C^{0,\beta}(\R)$, by Lemma~\ref{lem:compa} we can see that
\begin{align*}
\bigg|c_s\sum_{\substack{m\in \Z\\ m\neq 0}}
\big(u(hj)&-u(h(j+m))\big) \int_{|y-h(j+m)|< h/2} 
\frac{dy}{|hj-y|^{1+2s}}-(-\Delta_h)^s(r_hu)_j\bigg|\\
&\le \sum_{\substack{m\in \Z\\m\neq 0}}
\big|u(hj)-u(h(j+m))\big| \bigg|c_s\int_{|y-h(j+m)|< h/2} \frac{dy}{|hj-y|^{1+2s}}-\frac{K_s(m)}{h^{2s}} \bigg| \\ 
&\le C_s[u]_{C^{0,\beta}(\R)}\sum_{\substack{m\in \Z\\m\neq 0}} \frac{|hm|^{\beta}}{h^{2s}|m|^{2+2s}}
\le C_s[u]_{C^{0,\beta}(\R)}h^{\beta-2s}.
\end{align*}

\textit{(ii)}. Observe that
$\frac{d}{dx}$ commutes with $(-\Delta)^s$ and $D_+$ with $(-\Delta_h)^s$. Then
\begin{multline*}
\big\|D_+(-\Delta_h)^s (r_h u)- r_h\big(\tfrac{d}{dx}(-\Delta)^s u\big) \big\|_{\ell^\infty} \\
\le  \big\|(-\Delta_h)^s D_+(r_h u) - (-\Delta_h)^s \big(r_h \tfrac{d}{dx}u\big) \big\|_{\ell^\infty}
+ \big\| (-\Delta_h)^s \big(r_h \tfrac{d}{dx}u\big) - r_h\big(\tfrac{d}{dx}(-\Delta)^s u\big) \big\|_{\ell^\infty}.
\end{multline*}
For the second term we just apply \textit{(i)}. As for the first one, by using the Mean Value Theorem,
\begin{align*}
\big| (-\Delta_h)^s D_+(r_h u)_j- &(-\Delta_h)^s  \big(r_h \tfrac{d}{dx}u \big)_j \big| \\
&= \bigg|\frac1{h^{2s}}\sum_{m\neq 0} K_s(m) \bigg[ \bigg( \frac{u(h(j+1))- u(hj)}{h} - u'(hj) \bigg) \\
&\qquad\qquad-\bigg(\frac{u(h(j+m+1))- u(h(j+m)))}{h}- u'(h(j+m))\bigg)\bigg]\bigg| \\
&= \bigg|\frac1{h^{2s}}\sum_{m\neq 0} K_s(m) \bigg[\big(u'(\xi_j)-u'(hj)\big) - 
\big( u'(\xi_{j+m})- u'(h(j+m)) \big)\bigg]\bigg| \\
&\le C\frac1{h^{2s}}\sum_{m\neq 0} K_s(m)  h^{\beta} \le C h^{\beta-2s},
\end{align*}
where $\xi_j$ is an intermediate point between $hj$ and $h(j+1)$ and analogously $\xi_{j+m}$. In the last inequality we have used the  hypothesis on the regularity of $u$.

\textit{(iii)}. By taking into account~\eqref{eq:lem3}, we can write
\begin{align*}
& r_h\big((-\Delta)^su\big)_j
= c_{s}\sum_{m\in \Z}\int_{|y-h(j+m)|< h/2} \frac{u(hj)-u(y)-u'(hj)(hj-y)}{|hj-y|^{1+2s}}\,dy \\
& \quad= c_{s}\bigg(\sum_{m\in \Z} \int_{|y-h(j+m)|< h/2} \frac{u(h(j+m))-u(y)-u'(hj)(h(j+m)-y)}{|hj-y|^{1+2s}}\,dy \\
& \qquad\quad+ \sum_{\substack{m\in \Z\\ m\neq 0}}
\big(u(hj)-u(h(j+m))-u'(hj)(hj-h(j+m))\big) \int_{|y-h(j+m)|< h/2} \frac{dy}{|hj-y|^{1+2s}}\bigg) \\
& \quad=: c_s(S_1+S_2).
\end{align*}
By the hypotheses and~\eqref{eq:lem2}, we have
\[
|S_1|\le C_s[u]_{C^{1,\beta}(\R)}\sum_{m\in \Z}|hm|^{\beta}h \frac{1}{h^{2s}|m|^{1+2s}} = C_s[u]_{C^{1,\beta}(\R)}h^{1+\beta-2s}.
\]
We compare $c_sS_2$ with $(-\Delta_h)^s(r_hu)_j$. Since $K_s(m)$ is even in $m$, we can write
\[
(-\Delta_h)^s(r_hu)_j=\sum_{\substack{m\in \Z\\ m\neq 0}} \big(u(hj)-u(h(j+m))-u'(hj)(hj-h(j+m))\big)\frac{K_s(m)}{h^{2s}}.
\]
Then \eqref{eq:lem1} and the hypothesis on $u$ give the result.

\textit{(iv)}. The proof in this case follows as in \textit{(ii)} by iteration $l$ times.
\end{proof}

\section{The negative powers or fractional discrete integral}
\label{sec:dicreteFractional}

Analogously to the fractional discrete Laplacian, the negative powers of the discrete Laplacian, also called the \textit{fractional discrete integral}, is defined for a discrete function $f:\Z_h\to\R$ and $j\in \Z$ as
$$(-\Delta_h)^{-s}f_{j}=\frac{1}{\Gamma(s)} \int_0^{\infty}e^{t\Delta_h}f_{j} \frac{dt}{t^{1-s}},\quad s>0.$$
By writing down the semidiscrete heat kernel and using Fubini's Theorem we readily see that
this is a convolution operator on $\Z$:
$$(-\Delta_h)^{-s}f_j=h^{2s} \sum_{m\in\Z}K_{-s}(j-m)f_{m},$$
where the discrete kernel $K_{-s}$ is given by 
$$K_{-s}(m)=\frac{1}{\Gamma(s)}  \int_0^{\infty}G(m,t) \frac{dt}{t^{1-s}}.$$
It is worth to compare this formula for $K_{-s}$ with the one for $K_s$, see \eqref{eq:kernelFractionalLaplacian}.

\begin{thm}
\label{lem:kernelFractionalIntegral}
Let $0<s<1/2$. We have the explicit expression
\begin{equation}
\label{eq:kernelFrIntegralOned}
K_{-s}(m) = \frac{4^{-s}\Gamma(1/2-s)\Gamma(|m|+s)}{\sqrt{\pi}\,\Gamma(s)
\Gamma(|m|+1-s)},~\hbox{for}~m\in \Z\setminus\{0\}, \quad K_{-s}(0) 
= \frac{4^{-s}\Gamma(1/2-s)}{\sqrt{\pi}\,\Gamma(1-s)}.
\end{equation}
Moreover, there exists a positive constant $C_s$ such that, for $m\in\Z\setminus\{0\}$,
\begin{equation}
\label{eq:growthFractional}
\Big| K_{-s}(m) - \frac{c_{s}}{|m|^{1-2s}}\Big| \le \frac{C_{s}}{|m|^{2-2s}}.
\end{equation}
\end{thm}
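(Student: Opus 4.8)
The plan is to run, almost verbatim, the argument that produced the kernel $K_s$ in Theorem~\ref{thm:basicProperties}\eqref{aa} and the $\Gamma$-ratio bound of Lemma~\ref{eq:lowerBound}, with $s$ replaced by $-s$, while keeping careful track of where the restriction $0<s<1/2$ is forced. Writing out the semidiscrete heat kernel and using $I_m=I_{-m}$ (from~\eqref{eq:negPos}),
\[
K_{-s}(m)=\frac1{\Gamma(s)}\int_0^\infty e^{-2t}I_{|m|}(2t)\,\frac{dt}{t^{1-s}},\qquad m\in\Z .
\]
The first thing to settle is convergence of this integral. Near $t=0$ the small-argument behaviour of $I_{|m|}$ gives $G(m,t)\sim t^{|m|}/\Gamma(|m|+1)$, and integrability there needs only $|m|+s>0$, always true for $s>0$. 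Near $t=\infty$ the large-argument asymptotics give $G(m,t)\sim(4\pi t)^{-1/2}$, so $G(m,t)t^{s-1}\sim c\,t^{s-3/2}$, which is integrable at infinity precisely when $s<1/2$. Thus the hypothesis already enters in making $(-\Delta_h)^{-s}$ meaningful, and the same inequality $1/2-s>0$ will make the $\Gamma$'s appearing below finite and positive.

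For the explicit formula I would apply the Bessel--Beta integral~\eqref{eq:rusos} — the very identity used to obtain~\eqref{eq:fractionalKernelOned} — with $c=2$, $\nu=|m|$ and the exponent $-s$ in place of $s$, which yields, for every $m\in\Z$,
\[
\int_0^\infty e^{-2t}I_{|m|}(2t)\,\frac{dt}{t^{1-s}}=\frac{4^{-s}\Gamma(1/2-s)\,\Gamma(|m|+s)}{\sqrt\pi\,\Gamma(|m|+1-s)},
\]
so that $K_{-s}(m)=\frac{4^{-s}\Gamma(1/2-s)\,\Gamma(|m|+s)}{\sqrt\pi\,\Gamma(s)\,\Gamma(|m|+1-s)}$. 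For $m\neq0$ this is~\eqref{eq:kernelFrIntegralOned}; for $m=0$ the factor $\Gamma(|m|+s)/\Gamma(|m|+1-s)$ collapses to $\Gamma(s)/\Gamma(1-s)$ and the formula gives the stated value of $K_{-s}(0)$. (The Fubini step used in Section~\ref{sec:dicreteFractional} to write $(-\Delta_h)^{-s}$ as convolution against $h^{2s}K_{-s}$ is justified by the same bounds, exactly as in the proof of Theorem~\ref{thm:basicProperties}.)

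For~\eqref{eq:growthFractional}, set $c_s:=\frac{4^{-s}\Gamma(1/2-s)}{\sqrt\pi\,\Gamma(s)}>0$ (the normalizing constant of the continuous one-dimensional fractional integral $(-\Delta)^{-s}$), so that $K_{-s}(m)=c_s\,\frac{\Gamma(|m|+s)}{\Gamma(|m|+1-s)}$, and it remains to prove
\[
\Bigl|\frac{\Gamma(|m|+s)}{\Gamma(|m|+1-s)}-\frac1{|m|^{1-2s}}\Bigr|\le\frac{C_s}{|m|^{2-2s}},\qquad m\in\Z\setminus\{0\},
\]
which is Lemma~\ref{eq:lowerBound} with $-s$ in place of $s$ and is proved the same way. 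Concretely I would use the Beta representation $\frac{\Gamma(|m|+s)}{\Gamma(|m|+1-s)}=\frac1{\Gamma(1-2s)}\int_0^1(1-\tau)^{-2s}\tau^{|m|+s-1}\,d\tau$ (legitimate since $1-2s>0$) and Watson's lemma near $\tau=1$ to extract the leading term $|m|^{2s-1}$ with remainder $O(|m|^{2s-2})$; equivalently, the elementary expansion $\frac{\Gamma(x+a)}{\Gamma(x+b)}=x^{a-b}\bigl(1+\frac{(a-b)(a+b-1)}{2x}+O(x^{-2})\bigr)$ with $a=s$, $b=1-s$ has $a+b-1=0$, so the $1/x$ term vanishes and the error is actually $O(|m|^{-(3-2s)})$, comfortably inside the claimed bound. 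Multiplying by $c_s$ finishes the estimate.

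There is no deep obstacle here: the two analytic ingredients — the Bessel integral~\eqref{eq:rusos} and the $\Gamma$-ratio estimate of Lemma~\ref{eq:lowerBound} — are already available from the treatment of $K_s$. The only point requiring genuine care is the threefold role of $0<s<1/2$: convergence of the defining integral at infinity, finiteness and positivity of $\Gamma(1/2-s)$ and $\Gamma(1-2s)$, and validity of the Beta-integral representation used in the asymptotic step.
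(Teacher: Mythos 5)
Your proposal is correct and follows essentially the same route as the paper: the explicit formula comes from the Bessel integral \eqref{eq:rusos} (with $\alpha=s$, $c=2$, $\nu=|m|$), and the asymptotic estimate \eqref{eq:growthFractional} is exactly \eqref{eq:lowerFrac} of Lemma~\ref{eq:lowerBound} multiplied by the constant $4^{-s}\Gamma(1/2-s)/(\sqrt{\pi}\,\Gamma(s))$. Your additional remarks (convergence of the defining integral at infinity forcing $s<1/2$, and the alternative Beta-integral or \eqref{eq:term2} derivations of the $\Gamma$-ratio bound) are accurate but only elaborate on what the paper's two cited ingredients already provide.
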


\begin{proof}
One can compute the integral defining the kernel explicitly, for $0<s<1/2$, just by using formula~\eqref{eq:rusos}. The estimate in~\eqref{eq:growthFractional} follows from \eqref{eq:kernelFrIntegralOned} and Lemma~\ref{eq:lowerBound}.
\end{proof}

As we mentioned in the Introduction,
a discrete \textit{analogue} of the usual continuous fractional integration can be defined for $f:\Z^n\to\R$
and $0<\lambda<n$ by
\[
I_{\lambda}f_j = \sum_{\begin{smallmatrix}m\in\Z^n\\m\neq0\end{smallmatrix}}\frac{1}
{|m-j|^\lambda}f_{m}, \quad j\in\Z^n.
\]
E. M. Stein and S. Wainger obtained $\ell^p\to \ell^q$ estimates for this operator in~\cite{Stein-Wainger}. In view of our expression for the kernel $K_{-s}$ in \eqref{eq:kernelFrIntegralOned} and \eqref{eq:lowerFrac} of Lemma~\ref{eq:lowerBound}, we notice that the one-dimensional version of the operator $I_{1-2s}$
controls, up to a factor of $h$, our fractional discrete integral $(-\Delta_h)^{-s}$. Therefore, mapping properties for $(-\Delta_h)^{-s}$ can be deduced from the results in~\cite{Stein-Wainger}.

\section{Asymptotics for the two dimensional fractional discrete Laplacian}
\label{sec:Mellin}

In this section, we present an expression for the
two dimensional kernel $K_s(m)$, $m=(m_1,m_2)\in\Z^2$, when both $|m_i|\to \infty$.
We denote $\|m\|_2:=(|m_1|^2+|m_2|^2)^{1/2}$, for $m\in \Z^2$.

\begin{lem}
\label{lem:expressionKs2d}
Let $0<s<1$. Let $K_s$ be the kernel in \eqref{eq:kernelFractionalLaplacian}
in dimension two. Then
\begin{equation}
\label{eq:frKernelEstimate2d}
0<K_s(m) = \frac{c_{2,s}}{\|m\|_2^{2+2s}}+\text{higher order terms},\quad
\text{when both}~m_i\to \pm\infty, \quad  i=1,2,
\end{equation}
where
\begin{equation*}
c_{2,s} = \frac{4^s\Gamma(1+s)}{\pi|\Gamma(-s)|}.
\end{equation*}
\end{lem}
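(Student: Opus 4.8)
The plan is to start from the semigroup representation of the kernel in dimension two, namely
\[
K_s(m) = \frac{1}{|\Gamma(-s)|}\int_0^\infty G(m_1,t)\,G(m_2,t)\,\frac{dt}{t^{1+s}},
\]
which follows because the two dimensional semidiscrete heat kernel is the product of one dimensional kernels, $G(m,t)=e^{-4t}I_{m_1}(2t)I_{m_2}(2t)$ up to the normalization built into $G$. Positivity of $K_s(m)$ is immediate from positivity of each $I_{m_i}$. The asymptotic regime is $|m_1|,|m_2|\to\infty$, so the dominant contribution to the integral comes from large $t$, and there I would substitute the large-argument asymptotics of the modified Bessel function, $I_\nu(2t)\sim \frac{e^{2t}}{\sqrt{4\pi t}}$ (equation \eqref{eq:asymptotics-infinite} in the paper), which is uniform enough once we know $t$ is large compared to $\nu^2$. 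Plugging this in formally gives $G(m_1,t)G(m_2,t)\approx \frac{1}{4\pi t}$, but this is only the leading behaviour of the integrand and does not see the $\|m\|_2$ dependence; the $m$-dependence enters through the transition between the small-$t$ (Gaussian-type, $\sim t^{|m_i|}/\Gamma(|m_i|+1)$) and large-$t$ regimes, with the crossover near $t\sim \|m\|_2$.

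The cleanest way to extract the $\|m\|_2^{-(2+2s)}$ law is to pass to a continuous comparison. First I would recall (or prove, as in the references \cite{FiveGuys,GrIl}) that the semidiscrete heat kernel is well approximated by the Gaussian: $e^{-2t}I_m(2t)\approx \frac{1}{\sqrt{4\pi t}}e^{-m^2/(4t)}$ for $t$ large relative to $m$, with controllable corrections. Substituting the product of two such Gaussians,
\[
K_s(m)\approx \frac{1}{|\Gamma(-s)|}\int_0^\infty \frac{1}{4\pi t}e^{-\|m\|_2^2/(4t)}\,\frac{dt}{t^{1+s}}
= \frac{1}{4\pi|\Gamma(-s)|}\int_0^\infty e^{-\|m\|_2^2/(4t)}\,\frac{dt}{t^{2+s}},
\]
and the change of variables $r=\|m\|_2^2/(4t)$ turns the integral into a Gamma function: it equals $\frac{(4/\|m\|_2^2)^{1+s}}{4\pi|\Gamma(-s)|}\int_0^\infty r^{s}e^{-r}\,dr = \frac{4^{1+s}\Gamma(1+s)}{4\pi|\Gamma(-s)|}\,\|m\|_2^{-2-2s}=\frac{4^s\Gamma(1+s)}{\pi|\Gamma(-s)|}\,\|m\|_2^{-2-2s}$, which is exactly $c_{2,s}\|m\|_2^{-2-2s}$. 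The ``higher order terms'' are then precisely the error committed by replacing $e^{-2t}I_{m_i}(2t)$ by the Gaussian and by truncating the asymptotic expansion of $I_{m_i}$; these can be organized as an asymptotic series in inverse powers of $\|m\|_2$ by keeping further terms in the Bessel asymptotics.

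The main obstacle is making the Bessel-to-Gaussian approximation uniform across the whole range of integration while $|m_1|$ and $|m_2|$ may grow at different rates: for $t$ small compared to $\min(|m_1|,|m_2|)$ one must instead use the small-argument/large-order asymptotics \eqref{eq:asymptotics-order-large}, and one has to check that this regime contributes only to the higher order terms and does not spoil the leading constant. I would handle this by splitting $\int_0^\infty = \int_0^{\varepsilon\|m\|_2^2}+\int_{\varepsilon\|m\|_2^2}^\infty$ (or a similar $m$-dependent split), bounding the first piece crudely using the rapid decay $I_{m_i}(2t)\lesssim t^{|m_i|}/\Gamma(|m_i|+1)$ to show it is $O(\|m\|_2^{-N})$ for every $N$, and on the second piece using the uniform Gaussian approximation with explicit remainder. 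A secondary technical point is controlling the cross terms $I_{m_1}(2t)I_{m_2}(2t)$ when one of $m_1,m_2$ is much smaller than the other — but since both are assumed to tend to infinity, the product of error factors $(1+O(m_i^2/t))$ stays under control on the relevant range $t\gtrsim\|m\|_2^2$. Assembling these pieces yields \eqref{eq:frKernelEstimate2d}.
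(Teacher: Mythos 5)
Your proposal is correct in outline and arrives at the right constant, but it takes a genuinely different route from the paper. The paper also starts from $|\Gamma(-s)|K_s(m)=\int_0^\infty e^{-4t}I_{m_1}(2t)I_{m_2}(2t)\,t^{-1-s}\,dt$, but then writes this as a Mellin convolution $\frac{1}{2\pi i}\int_{c_1-i\infty}^{c_1+i\infty}\widetilde f_{m_1}(x_1)\widetilde f_{m_2}(s-x_1)\,dx_1$, uses the closed form \eqref{eq:rusos} to express each factor as a ratio of Gamma functions, expands those ratios via \eqref{eq:term2} as $m_1,m_2\to\infty$, and evaluates the integral of the leading term by shifting the contour and summing residues at $x_1=-j-1/2$; the resulting series is resummed through \eqref{eq:def-gamma} and \eqref{eq:def-e} into $\Gamma(1+s)(m_1^2+m_2^2)^{-(1+s)}$. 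You instead replace $e^{-2t}I_{m_i}(2t)$ by the Gaussian $(4\pi t)^{-1/2}e^{-m_i^2/(4t)}$ and compute the subordination integral exactly; your change of variables and the value $c_{2,s}=4^s\Gamma(1+s)/(\pi|\Gamma(-s)|)$ check out. What the Mellin route buys is a systematic expansion: the higher order terms fall out of the further terms in \eqref{eq:term2} (the paper's $\mathit{II}$--$\mathit{IV}$), and the scheme iterates to dimension $n>2$ by repeated convolution. What your route buys is transparency -- it exhibits $c_{2,s}$ as the constant of the continuous $2$-dimensional kernel obtained by subordinating the Gaussian -- and it avoids contour integration. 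The one substantive ingredient you would still have to supply, and which the paper never needs, is a quantitative local limit theorem: a uniform bound of the form $\bigl|e^{-2t}I_m(2t)-(4\pi t)^{-1/2}e^{-m^2/(4t)}\bigr|\le C\,t^{-3/2}(1+m^2/t)^{N}e^{-cm^2/t}$ (or similar) valid on $t\gtrsim|m|$, together with a superpolynomial bound on $t\lesssim|m|$; your proposed split at $t=\varepsilon\|m\|_2^2$ alone is not quite enough, since on the intermediate range $|m_i|\lesssim t\le\varepsilon\|m\|_2^2$ the Gaussian factor is only bounded by a constant and one needs the large-deviation decay of $e^{-2t}I_{m_i}(2t)$ to see that this range contributes $O(e^{-c/\varepsilon})\|m\|_2^{-2-2s}$. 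Also note the crossover scale is $t\sim\|m\|_2^2$, not $t\sim\|m\|_2$ as written in your first paragraph, though your splitting in the last paragraph already uses the correct scale.
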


\begin{proof}
The ideas in this subsection are inspired in some results
and techniques from \cite{Chinta-Jorgenson-Karlsson}. First we list the ingredients that will be used.

In \cite[Chapter 4, (5.05)]{Olver} we find the following expansion for the ratio of two Gamma functions, for $z\in \R$, $z\to \infty$,
\begin{equation}
\label{eq:term2}
\frac{\Gamma(z+a)}{\Gamma(z+b)}= z^{a-b}\bigg(1+\frac{(a-b)(a+b-1)}{2z}+\frac{1}{12}
\binom{a-b}{2}\big(3(a+b)^2-7a-5b+2\big)
\frac{1}{z^2}\bigg)+E(a,b,z),
\end{equation}
where $E(a,b,z)$ is an integral (convergent if $3+b-a>0$) whose absolute value is bounded by $z^{-(3+b-a)}$ times a constant depending on $a$ and $b$.

We will use formula \eqref{eq:rusos} with $c=2$, $\alpha=-s$ (where $0<s<1$), and $\nu=m_i$ where $m_i\in \N$, $m_i\neq 0$. Indeed, we have
\begin{equation}
\label{eq:rusos-nuestro-caso}
\int_0^{\infty} e^{-2t} I_{m_i}(2t) t^{-s-1}\,dt
= \frac{4^{s}}{\sqrt{\pi}} \frac{\Gamma(1/2+s)\Gamma(m_i-s)}{\Gamma(m_i+1+s)}.
\end{equation}

Let
\begin{equation}
\label{eq:funciones-fi}
f_{m_i}(t)=e^{-2t}I_{m_i}(2t).
\end{equation}
Consider the Mellin transform
\begin{equation}
\label{eq:Mellin}
\widetilde{f}_{m_i}(s) = \int_0^{\infty}f_{m_i}(t) \frac{dt}{t^{1+s}}
\end{equation}
which is absolutely convergent, by~\eqref{eq:rusos-nuestro-caso}, for $0<s<1$. By the Mellin inversion theorem we can recover $f_{m_i}(t)$ via the inverse Mellin transform, see for instance \cite{Titchmarsh}, that gives
\begin{equation}
\label{eq:inverse-Mellin}
f_{m_i}(t) = \frac{1}{2\pi i} \int_{c-i\infty}^{c+i\infty}\widetilde{f}_{m_i}(s)t^s\,ds,
\end{equation}
which is well defined for $c$ a real value between $0$ and $1$; the integral above is a line integral over a vertical line in the complex plane.

We recall the elementary formulas
\begin{equation}
\label{eq:def-gamma}
\Gamma(w)a^{-w} = \int_0^{\infty}e^{-at}t^{w} \,\frac{dt}{t}, \quad \Re w>0,
\end{equation}
and
\begin{equation}
\label{eq:def-e}
e^{-t} = \sum_{k=0}^{\infty}t^k\, \frac{(-1)^k}{k!}.
\end{equation}

Let $f_{m_i}(t)$ be the functions defined in~\eqref{eq:funciones-fi}. By using \eqref{eq:Mellin} and \eqref{eq:inverse-Mellin} we compute the Mellin transform of the product of these functions. Observe that the asymptotics for the modified Bessel function \eqref{eq:asymptotics-zero} and \eqref{eq:asymptotics-infinite} ensure that the Mellin transform $\widetilde{f_{m_1}f_{m_2}}(s)$ is well defined for $0<s<1$. We write
\begin{align*}
|\Gamma(-s)|K_s(m)=\widetilde{f_{m_1}f_{m_2}}(s) &= \int_0^{\infty} f_{m_1}(t)f_{m_2}(t) \frac{dt}{t^{1+s}} \\
&= \int_0^{\infty} f_{m_2}(t) \bigg[\frac{1}{2\pi i}
\int_{c_1-i\infty}^{c_1+i\infty}\widetilde{f}_{m_1}(x_1)t^{x_1}\,dx_1\bigg] \frac{dt}{t^{1+s}} \\
&= \frac{1}{2\pi i} \int_{c_1-i\infty}^{c_1+i\infty}\widetilde{f}_{m_1}(x_1)
\bigg[ \int_0^{\infty} f_{m_2}(t) \frac{dt}{t^{1+s-x_1}}\bigg]dx_1 \\
&= \frac{1}{2\pi i} \int_{c_1-i\infty}^{c_1+i\infty} \widetilde{f}_{m_1}(x_1) \widetilde{f}_{m_2}(s-x_1) \, dx_1.
\end{align*}
Note that the integrals that define $\widetilde{f}_{m_1}(x_1)$ and $ \widetilde{f}_{m_2}(s-x_1)$, see~\eqref{eq:Mellin}, are absolutely convergent for $-m_1<-\Re x_1<1/2$ and $-m_2<\Re x_1-s<1/2$, respectively, with $\Re x_1=c_1$. Thus, it is justified that we can write each $f_{m_i}$ above as the inverse Mellin transform of $\widetilde{f}_{m_i}$, with $c_i\in(0,1/2)$. With the computation above, and by~\eqref{eq:Mellin}, \eqref{eq:funciones-fi} and~\eqref{eq:rusos-nuestro-caso},
we have that
\begin{equation}
\label{eq:churro-de-gammas}
\widetilde{f_{m_1}f_{m_2}}(s) = \frac{4^s}{2\pi^2 i} \int_{c_1-i\infty}^{c_1+i\infty} \Gamma(1/2+x_1)\Gamma(1/2-x_1+s)\frac{\Gamma(m_1-x_1)}{\Gamma(m_1+x_1+1)}
 \frac{\Gamma(m_2+x_1-s)}{\Gamma(m_2-x_1+s+1)} \,dx_1.
\end{equation}
Observe that we have an analogous expression just by interchanging the roles of $m_1$ and $m_2$.

By~\eqref{eq:term2}, when $m_1,m_2\to\infty$, we can write
\[
\frac{\Gamma(m_1-x_1)}{\Gamma(m_1+x_1+1)}
\cdot \frac{\Gamma(m_2+x_1-s)}{\Gamma(m_2-x_1+s+1)} 
= \mathit{I}+\mathit{II}+\mathit{III}+\mathit{IV},
\]
where
\[
\mathit{I} := \frac{1}{m_1^{2x_1+1}m_2^{-2x_1+1+2s}},
\quad \mathit{II} := \frac{1}{m_1^{2x_1+1}} \,\mathcal{O}\bigg(\frac{1}{m_2^{-2x_1+2s+3}}\bigg),
\]
\[
\mathit{III} := \frac{1}{m_2^{-2x_1+2s+1}} \,\mathcal{O}\bigg(\frac{1}{m_1^{2x_1+3}}\bigg),
\quad \mathit{IV} := \mathcal{O}\bigg(\frac{1}{m_1^{2x_1+3}}\bigg) \,\mathcal{O}\bigg(\frac{1}{m_2^{-2x_1+2s+3}}\bigg).
\]
The term $\mathit{I}$ is the leading term. We compute the integral in $x_1$ containing this term. In order to do that, we move the contour of integration to infinity (in the negative direction of the $\Re x_1$-axis) and then we use the residue theorem (the integrand has poles at $ x_1=-j-1/2$ with residues equal to  $(-1)^j/j!$):
\begin{align*}
S &:= \frac{1}{2\pi i} \int_{c_1-i\infty}^{c_1+i\infty}
m_1^{-1-2x_1}
m_2^{-1-2s+2x_1}  \Gamma(1/2+x_1) \Gamma(1/2-x_1+s)\,dx_1 \\
&= m_1^{-1}m_2^{-1-2s} \sum_{j=0}^{\infty} \Big(\frac{m_1^2}{m_2^2}\Big)^{j+1/2}
\frac{(-1)^j}{j!}\Gamma(1+j+s)\\
& = m_1^{-2-2s}\sum_{j=0}^{\infty}
\Big(\frac{m_1^2}{m_2^2}\Big)^{j+1+s}
\frac{(-1)^j}{j!}\Gamma(1+j+s) \\
&= m_1^{-2-2s} \sum_{j=0}^{\infty}
\int_{0}^{\infty}
e^{-\frac{m_2^2}{m_1^2}t}t^{1+j+s}
\,\frac{(-1)^j}{j!} \frac{dt}{t},
\end{align*}
where, in the last equality, we used~\eqref{eq:def-gamma} with $w=1+j+s$ and $a=\frac{m_2^2}{m_1^2}$.
Observe that, by~\eqref{eq:def-e}, we have
\begin{align*}
S &= m_1^{-2-2s}
\int_{0}^{\infty}
e^{-\big(\frac{m_2^2}{m_1^2}+1\big)t}t^{s}\,dt=\Gamma(1+s)m_1^{-2-2s}
\bigg(\frac{m_2^2}{m_1^2}+1\bigg)^{-(1+s)}= \frac{\Gamma(1+s)}{(m_1^2+m_2^2)^{1+s}}\,j.
\end{align*}
We obtain that the integral coming from~\eqref{eq:churro-de-gammas} involving $\mathit{I}$ is $\displaystyle
\frac{4^s\Gamma(1+s)}{\pi\|m\|_2^{2+2s}}$.

It can be proved that for $\mathit{II}$, $\mathit{III}$ and $\mathit{IV}$ (the error terms) we have better decay in~$m$.
\end{proof}

\begin{rem}
Lemma~\ref{lem:expressionKs2d} can also be proved for any dimension $n>2$. The proofs of these statements need rather cumbersome computations, inspired by \cite[Subsection 6.4]{Chinta-Jorgenson-Karlsson}, with suitable modifications, and we omit them for the sake of clarity.
\end{rem}

\begin{rem}
In order to prove analogous ``asymptotic estimates'' for the kernel of the fractional discrete integral by using Mellin transforms, in two dimensions, one ends up to an expression of the form
\begin{multline*}
\int_0^{\infty}e^{-4t}I_{m_1}(2t)I_{m_2}(2t)
\frac{dt}{t^{1-s}} \\
= \frac{4^s}{\pi}\frac{1}{2\pi i} \int_{c_1-i\infty}^{c_1+i\infty} \Gamma(1/2-x_1)\Gamma(1/2+x_1-s)
\frac{\Gamma(m_1+x_1)}{\Gamma(m_1-x_1+1)}
\frac{\Gamma(m_2-x_1+s)}{\Gamma(m_2+x_1-s+1)} \,dx_1.
\end{multline*}
There is a discussion about this identity when $m_1=m_2=0$ in \cite[Section~6.2]{Chinta-Jorgenson-Karlsson}
that can be of interest. Thus in a similar way to obtain
\begin{equation}
\label{eq:fractionalIntegralKernelEstimate}
0<K_{-s}(m) = \frac{c_{2,-s}}{\|m\|_2^{2-2s}}+\text{higher order terms},\quad
\text{when both } m_i\to \pm\infty, \quad  i=1,2.
\end{equation}
\end{rem}

\section{Illustrations in dimension one}
\label{sec:oneDim}

As we said in the Introduction, 
we may raise the question whether the Poisson problem $(-\Delta)^su=f$ can be discretized by using
the formulas we obtained for the fractional discrete Laplacian and fractional discrete integral.
One would like to see if the solutions to the discrete Poisson problem converge
in some sense to the solutions of the continuous Poisson problem.

Some numerical experiments by using a numerical method were already made in \cite{Huang-Oberman}. Our intention is not to make numerical experiments here, but to illustrate the convergence of the discretized problem to the continuous one. Furthermore, with the analogous formula in one dimension for the fractional discrete integral kernel given in \eqref{eq:kernelFrIntegralOned} we are also able to \textit{solve} the discrete Poisson problem, so we improve in a sense the examples performed in~\cite{Huang-Oberman}.

Formula \eqref{eq:fractionalKernelOned} provides an exact expression for the kernel of the fractional discrete Laplacian in dimension one. This and the asymptotics in that formula will allow us to compute and draw some approximations in one dimension.

We present some pictures of known examples in dimension one. Some of the examples included in this section and in Section~\ref{sec:twoDim} were studied, with different numerical methods, by Huang and Oberman in~\cite{Huang-Oberman}.

By using~\eqref{eq:fractionalKernelOned} and \eqref{eq:constante1d}, and by taking into account \eqref{eq:term2} we can write
\[
(-\Delta_h)^su_{j} \approx F_1(j,h,s)+F_2(j,h,s)+F_3(j,h,s),\quad j\in \Z,
\]
where
\begin{equation}
\label{eq:F1}
F_1(j,h,s) := \frac{c_{s}}{h^{2s}} \sum_{\substack{|m|\le N\\ m\ne 0}} (u_{j}-u_{j-m})
\frac{\Gamma(|m|-s)}{\Gamma(|m|+1+s)},
\end{equation}
\begin{equation}
\label{eq:F2}
F_2(j,h,s) := \frac{c_{s}}{h^{2s}} \, u_{j} \sum_{|m|\ge N} \frac{1}{|m|^{1+2s}}
\end{equation}
and
\begin{equation*}
F_3(j,h,s) := \frac{c_{s}}{h^{2s}} \sum_{|m|\ge N}u_{j-m} \frac{1}{|m|^{1+2s}},
\end{equation*}
for certain $N\in \N$.

On the other hand, formula~\eqref{eq:kernelFrIntegralOned} allows us to \textit{solve} the discrete Poisson problem
for a given datum $f$. By using \eqref{eq:kernelFrIntegralOned} and again noticing~\eqref{eq:term2} we can write
\[
u_{j}\approx(-\Delta_h)^{-s}f_{j}=U_1(j,h,s)+U_2(j,h,s),\quad j\in \Z,
\]
where
\begin{equation}
\label{eq:U1}
U_1(j,h,s) := c_{-s}h^{2s} \sum_{|m|\le N}f_{j-m} \frac{\Gamma(|m|+s)}{\Gamma(|m|+1-s)}
\end{equation}
and
\begin{equation*}
U_2(j,h,s) := c_{-s}h^{2s} \sum_{|m|\ge N}f_{j-m} \frac{1}{|m|^{1-2s}},
\end{equation*}
for certain $N\in \N$.

\subsection{Example 1.}

The first example to be examined is the function
\begin{equation}
\label{eq:u1}
u(x)=e^{-x^2}.
\end{equation}
The fractional Laplacian of $u$ at $x=0$ can be obtained exactly by using Fourier transform,
\begin{equation}
\label{eq:primerEjemplo}
(-\Delta)^su(0) = \frac{1}{\sqrt{\pi}} \int_0^{\infty}k^{2s}e^{-k^2/4}\,dk
= \frac{4^s\Gamma(1/2+s)}{\sqrt{\pi}},
\end{equation}
see \cite[Section~6.1]{Huang-Oberman}.
Since the function $u(x)$ in \eqref{eq:u1} has rapid decay, we may ignore the term $F_3(j,h,s)$.
In Figure~\ref{figure1} we can see the exact value of \eqref{eq:primerEjemplo} and the approximation $F_1(j,h,s)+F_2(j,h,s)$ in \eqref{eq:F1} and \eqref{eq:F2} for the fractional discrete Laplacian related to $r_hu$.

\begin{figure}[h]
\includegraphics[scale=0.5]{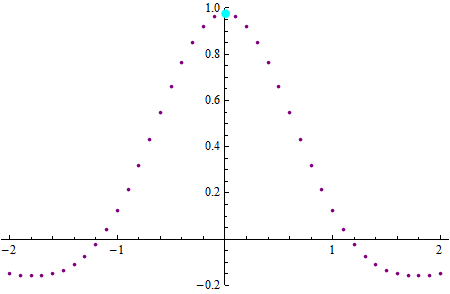}
\caption{The small-dotted line represents $F_1(j,h,s)+F_2(j,h,s)$ for the function $u(x)$ in~\eqref{eq:u1}, with $N=1000$, $s=0.25$, $h=0.1$ and $j\in \Z$, $-20\le j\le 20$, so that the horizontal axis is $[-20h,20h]$. The blue point is the value in~\eqref{eq:primerEjemplo}.}
\label{figure1}
\end{figure}

\subsection{Example 2.}
The second example is the function
\begin{equation}
\label{eq:u2}
u(x)=(1+x^2)^{-(1/2-s)}
\end{equation}
with the exact fractional Laplacian
\begin{equation}
\label{eq:segundo-ejemplo}
(-\Delta)^su(x) = \frac{4^s\Gamma(1/2+s)}{\Gamma(1/2-s)}(1+x^2)^{-(1/2+s)} =: f(x),
\end{equation}
see \cite[formula~(41)]{Huang-Oberman}.

The function $u(x)$ decays algebraically, and we are going to ignore the term $F_3(j,h,s)$ again. Actually, it is possible to take into account that this decay is as $(hm)^{-(1-2s)}$ to estimate a value for $F_3(j,h,s)$. Some comments and strategies about this are given in \cite{Huang-Oberman}. But our goal in this section is to illustrate our results, so we will not worry about it.
Moreover, we are using $N=1000$ so $F_3$ is not as important as it would be for small values of~$N$.
In Figure~\ref{figure2} the continuous line represents $f(x)$ in \eqref{eq:segundo-ejemplo} and the dotted line is the approximation $F_1(j,h,s)+F_2(j,h,s)$ in \eqref{eq:F1} and \eqref{eq:F2} for the fractional
discrete Laplacian of $r_hu$, with $u$ as in~\eqref{eq:u2}.

\begin{figure}[H]
\includegraphics[scale=0.5]{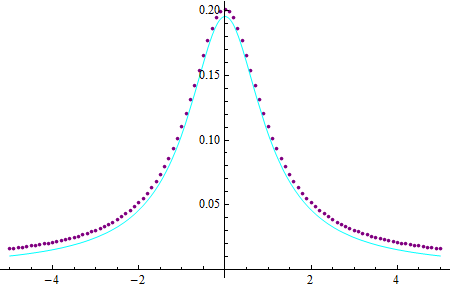}
\caption{The dotted line represents $F_1(j,h,s)+F_2(j,h,s)$ for the restriction of the function $u(x)$ in \eqref{eq:u2}, with $N=1000$, $s=0.4$, $h=0.1$ and $j\in \Z$, $-50\le j\le 50$, so that the horizontal axis is $[-50h,50h]$. The continuous line is $f(x)$ in~\eqref{eq:segundo-ejemplo}.}
\label{figure2}
\end{figure}

For the discrete Poisson problem, in Figure~\ref{figure3}
we plot $U_1(j,h,s)$ in \eqref{eq:U1} for $r_hf$, with $f(x)$ as in \eqref{eq:segundo-ejemplo}. We omit the term $U_2(j,h,s)$ because of the decay of the function $f(x)$ (we again remit to \cite{Huang-Oberman} for strategies to take into account this term). The continuous line is $u(x)$ in~\eqref{eq:u2}.

\begin{figure}[H]
\includegraphics[scale=0.5]{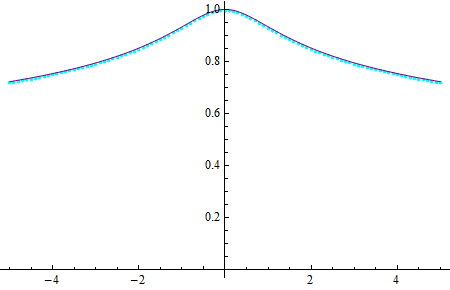}
\caption{The dotted line represents $U_1(j,h,s)$ for the function \eqref{eq:segundo-ejemplo}, with $N=1000$, $s=0.4$, $h=0.1$ and $j\in \Z$, $-50\le j\le 50$, so that the horizontal axis is $[-50h,50h]$. The continuous line is $u(x)$ in~\eqref{eq:u2}.}
\label{figure3}
\end{figure}

\subsection{A preliminary lemma.}
Before continuing with the next examples, we first prove the following lemma, see also \cite[Lemma 2.4]{Biler-Imbert-Karch} (note that there are some typos therein). Analogous computations, but for the fractional Laplacian instead of the fractional integral, are carried out in \cite{Getoor}. Nevertheless, we include our own proof for completeness. Since the lemma is stated for any dimension $n\ge1$, we will denote $\|x\|_2:=(\sum_{i=1}^n|x_i|^2)^{1/2}$, for $x\in \R^n$.

\begin{lem}
\label{lem:uBiler}
Let $0<s<1$, $\gamma>0$ and $x\in \R^n$. Let
\[
f(x)=(1-\|x\|_2^2)_+^{\gamma/2},
\]
where we use the notation $y_+ = \max\{y,0\}$.
Then
\begin{equation*}
u(x) := (-\Delta)^{-s}f(x) = \begin{cases} C_{\gamma,s,n}
\,{}_2F_1\Big(\frac{n-2s}{2},-\frac{\gamma+2s}{2};\frac{n}{2};\|x\|_2^2\Big),& \text{ if }\, \|x\|_2\le 1, \\
\widetilde{C}_{\gamma,s,n}\|x\|_2^{2s-n}
\,{}_2 F_1\Big(\frac{n-2s}{2},\frac{2-2s}{2};\frac{n+\gamma}{2}+1;\frac{1}{\|x\|_2^2}\Big),& \text{ if }\, \|x\|_2\ge 1, \end{cases}
\end{equation*}
where
\[
C_{\gamma,s,n} = 2^{-2s} \,\frac{\Gamma\big(\frac{n-2s}{2}\big)\Gamma\big(\frac{\gamma}{2}+1\big)}
{\Gamma\big(\frac{2s+\gamma}{2}+1\big)\Gamma\big(\frac{n}{2}\big)},
\]
and
\[
\widetilde{C}_{\gamma,s,n} = 2^{-2s} \,\frac{\Gamma\big(\frac{n-2s}{2}\big)
\Gamma\big(\frac{\gamma}{2}+1\big)}
{\Gamma\big(\frac{n+\gamma}{2}+1\big)\Gamma(s)},
\]
and ${}_2F_1$ is the Gaussian or ordinary hypergeometric function.
\end{lem}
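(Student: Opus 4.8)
The plan is to reduce $(-\Delta)^{-s}$ to its Riesz potential representation and then evaluate the resulting integral over the unit ball in polar coordinates. Starting from the semigroup definition $(-\Delta)^{-s}f(x)=\frac1{\Gamma(s)}\int_0^\infty e^{t\Delta}f(x)\,\frac{dt}{t^{1-s}}$ and the Gaussian expression for $e^{t\Delta}$, I would carry out the $t$-integration using $\int_0^\infty t^{s-n/2-1}e^{-\|x-y\|_2^2/(4t)}\,dt=\Gamma\!\big(\tfrac n2-s\big)\big(\tfrac{4}{\|x-y\|_2^2}\big)^{n/2-s}$, valid precisely when $2s<n$ (the range in which $(-\Delta)^{-s}f$ is finite for a bounded, compactly supported $f$). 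This gives
$$u(x)=\frac{\Gamma(n/2-s)}{4^{s}\pi^{n/2}\Gamma(s)}\int_{\|y\|_2<1}\frac{(1-\|y\|_2^2)^{\gamma/2}}{\|x-y\|_2^{\,n-2s}}\,dy,$$
the integral converging since the singularity $\|x-y\|_2^{2s-n}$ is locally integrable and the domain is bounded.

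Since the datum is radial, I would pass to polar coordinates $y=\rho\omega$, $\omega\in S^{n-1}$, and invoke the classical sphere-average identity
$$\int_{S^{n-1}}\frac{d\sigma(\omega)}{\|x-\rho\omega\|_2^{\,n-2s}}=|S^{n-1}|\,\rho^{2s-n}\,\Phi\!\Big(\frac{\|x\|_2}{\rho}\Big),\qquad |S^{n-1}|=\frac{2\pi^{n/2}}{\Gamma(n/2)},$$
where $\Phi(t)={}_2F_1\big(\tfrac{n-2s}{2},1-s;\tfrac n2;t^2\big)$ for $0\le t\le1$ and $\Phi(t)=t^{2s-n}\,{}_2F_1\big(\tfrac{n-2s}{2},1-s;\tfrac n2;t^{-2}\big)$ for $t\ge1$; this follows from the Gegenbauer generating function together with Euler's integral for ${}_2F_1$, and its parameters can be confirmed by evaluating at $t=0$ or against the elementary Newtonian case $s\to1$, $n=3$.

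For $\|x\|_2\ge1$ one has $\|x\|_2/\rho\ge1$ throughout, so only the large-argument form of $\Phi$ enters and, after inserting $|S^{n-1}|$,
$$u(x)=\frac{\Gamma(n/2-s)}{4^{s}\pi^{n/2}\Gamma(s)}\,\frac{2\pi^{n/2}}{\Gamma(n/2)}\,\|x\|_2^{2s-n}\int_0^1\rho^{n-1}(1-\rho^2)^{\gamma/2}\,{}_2F_1\Big(\tfrac{n-2s}{2},1-s;\tfrac n2;\tfrac{\rho^2}{\|x\|_2^2}\Big)\,d\rho.$$
Expanding the ${}_2F_1$ as a power series (uniformly convergent for $\rho\in[0,1]$ when $\|x\|_2>1$, the case $\|x\|_2=1$ following by continuity) and integrating term by term against $\int_0^1\rho^{n-1+2k}(1-\rho^2)^{\gamma/2}\,d\rho=\tfrac12 B\big(\tfrac n2+k,\tfrac\gamma2+1\big)$, the quotients $\Gamma(n/2+k)/(n/2)_k=\Gamma(n/2)$ and $1/\Gamma(n/2+\gamma/2+1+k)$ recombine the Pochhammer factors into ${}_2F_1\big(\tfrac{n-2s}{2},\tfrac{2-2s}{2};\tfrac{n+\gamma}{2}+1;\|x\|_2^{-2}\big)$, and collecting the remaining Gamma factors yields exactly the constant $\widetilde C_{\gamma,s,n}$.

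For $\|x\|_2\le1$ the radial variable crosses $\rho=\|x\|_2$, so I would split $\int_0^1=\int_0^{\|x\|_2}+\int_{\|x\|_2}^1$ and apply the two forms of $\Phi$ to the two pieces; each is again an Euler/incomplete-beta integral summing to a ${}_2F_1$ in $\|x\|_2^2$. The main technical step, and the step I expect to be the real obstacle, is to combine these two contributions into the single hypergeometric function ${}_2F_1\big(\tfrac{n-2s}{2},-\tfrac{\gamma+2s}{2};\tfrac n2;\|x\|_2^2\big)$ of the statement by means of a connection relation for ${}_2F_1$; the constants then assemble into $C_{\gamma,s,n}$, which can be verified independently by setting $x=0$, where ${}_2F_1\equiv1$ and the integral is the elementary beta integral $\tfrac12 B\big(s,\tfrac\gamma2+1\big)$. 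Everything else — the term-by-term integrations and the bookkeeping of Gamma quotients — is routine. An alternative that sidesteps the manual case split is to compute on the Fourier side, where $\widehat f$ is an explicit Bessel function, $\widehat{(-\Delta)^{-s}f}(\xi)=\|\xi\|_2^{-2s}\widehat f(\xi)$, and the radial inverse transform is a Weber--Schafheitlin integral whose known closed form already distributes into the cases $\|x\|_2<1$ and $\|x\|_2>1$; the cost there is heavier bookkeeping with Bessel normalizations.
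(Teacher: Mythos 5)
Your primary route --- the Riesz-potential representation in physical space, polar coordinates, the spherical average of $\|x-\rho\omega\|_2^{-(n-2s)}$ written as a ${}_2F_1$ of $\min/\max$, and term-by-term integration --- is genuinely different from the paper's proof, and for $\|x\|_2\ge 1$ it is complete and correct: only the large-argument branch of $\Phi$ enters, the beta integrals recombine the Pochhammer symbols exactly as you describe, and the constant assembles into $\widetilde C_{\gamma,s,n}$. The paper instead works entirely on the Fourier side: it quotes the Bessel-function formula for $\mathcal{F}f$ from \cite[p.~171]{StWe}, writes $(-\Delta)^{-s}f=(2\pi)^{-2s}\mathcal{F}^{-1}(\|\cdot\|_2^{-2s}\mathcal{F}f)$, reduces the radial inversion to a Hankel transform, and evaluates the resulting Weber--Schafheitlin integral by Watson's formulas \eqref{eq:Watson1}--\eqref{eq:Watson2}, which hand you a single ${}_2F_1$ in each of the regimes $\|x\|_2\le 1$ and $\|x\|_2\ge 1$ with no case-splitting inside the integral. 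That is precisely the ``alternative'' you relegate to your final sentence, and it is chosen exactly because it sidesteps the step you yourself flag as the real obstacle.

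That obstacle is a genuine gap in your main argument as written. For $\|x\|_2\le 1$ you split $\int_0^1=\int_0^{\|x\|_2}+\int_{\|x\|_2}^1$ and must apply the two different branches of $\Phi$; the inner piece then produces, term by term, incomplete beta functions of $\|x\|_2^2$ rather than pure powers, so neither piece is by itself a hypergeometric series with the parameters of the statement, and merging the two into the single ${}_2F_1\big(\tfrac{n-2s}{2},-\tfrac{\gamma+2s}{2};\tfrac n2;\|x\|_2^2\big)$ is a nontrivial identity, not bookkeeping. Checking $x=0$ confirms only the constant $C_{\gamma,s,n}$, not the function. Until that connection identity is exhibited, the interior case is unproved by your primary route; the Fourier/Weber--Schafheitlin argument (the paper's) is the efficient way to close it. One caveat common to both routes: the Riesz-kernel representation, and equally the convergence condition $\mu+\nu-\lambda>-1$ in \eqref{eq:Watson1}, require $2s<n$; you state this restriction explicitly, and it is indeed implicit in the lemma as the paper uses it.
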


\begin{proof}
On one hand, it is known that (see \cite[p.~171]{StWe})
\[
\mathcal{F}f(\xi) = \pi^{-\gamma/2} \Gamma\Big(\frac{\gamma}{2}+1\Big) \|\xi\|_2^{-\frac{n+\gamma}{2}}
J_{(n+\gamma)/2}(2\pi\|\xi\|_2),
\]
where $J_\alpha$ is the Bessel function of order $\alpha$. On the other hand (see for example \cite[Ch.~V.1]{StSingular}),
\[
u(x) = (-\Delta)^{-s}f(x) = (2\pi)^{-2s}\mathcal{F}^{-1}(\|\cdot\|_2^{-2s}\mathcal{F}f)(x).
\]
Since $f$ is a radial function, the Fourier transform of $f$ is in fact the so-called Hankel transform $\mathcal{H}f$, and $\mathcal{H}^{-1}=\mathcal{H}$. Therefore (see \cite[p.~155]{StWe}),
\begin{equation}
\label{eq:uBiler}
u(x) = \frac{\pi^{-2s+1-\frac{\gamma}{2}}}{2^{2s-1} \|x\|_2^{\frac{n}{2}-1}}\Gamma\Big(\frac{\gamma}{2}+1\Big)
\int_0^\infty\|\xi\|_2^{-2s-\frac{\gamma}{2}}
J_{(n+\gamma)/2}(2\pi\|\xi\|_2) J_{n/2-1}(2\pi\|\xi\|_2 \|x\|_2)\,d\|\xi\|_2.
\end{equation}
We will use the identities (see \cite[Ch.~13.4, p.~401]{Watson})
\begin{equation}
\label{eq:Watson1}
\begin{aligned}
\int_0^{\infty} \frac{J_{\mu}(at)J_{\nu}(bt)}{t^{\lambda}}\,dt
&= \frac{b^\nu\Gamma\big(\frac{\mu+\nu-\lambda+1}{2}\big)}{2^{\lambda}a^{\nu-\lambda+1}
\Gamma(\nu+1)\Gamma\big(\frac{\lambda+\mu-\nu+1}{2}\big)} \\
&\qquad \times {}_2F_1\Big(\frac{\mu+\nu-\lambda+1}{2},\frac{\nu-\lambda-\mu+1}{2};\nu+1;\frac{b^2}{a^2}\Big), \quad 0<b<a,
\end{aligned}
\end{equation}
and
\begin{equation}
\label{eq:Watson2}
\begin{aligned}
\int_0^{\infty} \frac{J_{\mu}(at)J_{\nu}(bt)}{t^{\lambda}}\,dt
&= \frac{a^\mu\Gamma\big(\frac{\mu+\nu-\lambda+1}{2}\big)}{2^{\lambda}b^{\mu-\lambda+1}
\Gamma(\mu+1)\Gamma\big(\frac{\lambda+\nu-\mu+1}{2}\big)} \\
&\qquad \times {}_2F_1\Big(\frac{\mu+\nu-\lambda+1}{2},\frac{\mu-\lambda-\nu+1}{2};\mu+1;\frac{a^2}{b^2}\Big), \quad 0<a<b,
\end{aligned}
\end{equation}
which are valid when $\mu+\nu-\lambda>-1$ and $\lambda>-1$. Now, by taking $\mu=\tfrac{n+\gamma}{2}$, $\nu=\tfrac{n}{2}-1$, $\lambda=2s+\tfrac{\gamma}{2}$, $a=2\pi$ and $b=2\pi\|x\|_2$, we apply \eqref{eq:Watson1} in the case $\|x\|_2\le1$ and~\eqref{eq:Watson2} in the case $\|x\|_2>1$ to compute the integral in~\eqref{eq:uBiler}. The desired result follows.
\end{proof}

\subsection{Example 3.}
The following example involves the function with compact support
\begin{equation}
\label{eq:f2}
f(x)=(1-x^2)_+^{1-s}.
\end{equation}
By Lemma~\ref{lem:uBiler}, the solution of $(-\Delta)^su=f$ is given by
\begin{equation}
\label{eq:tercer-ejemplo}
u(x) = \begin{cases} 4^{-s} \frac{\Gamma(1/2-s)\Gamma(2-s)}{\pi^{1/2}}(1-(1-2s)x^2),& \text{ if }\, |x|\le 1, \\
4^{-s} \frac{\Gamma(1/2-s)\Gamma(2-s)}{\Gamma(s)\Gamma(5/2-s)}|x|^{2s-1}
\,{}_2F_1\Big(1/2-s,1-s;5/2-s;\frac{1}{|x|^2}\Big),& \text{ if }\, |x|\ge 1, \end{cases}
\end{equation}
see also \cite{Biler-Imbert-Karch} and \cite[(42)]{Huang-Oberman}.

In Figure~\ref{figure4} the dotted line represents the sum $F_1(j,h,s)+F_2(j,h,s)$ in \eqref{eq:F1} and \eqref{eq:F2} related to the function $u(x)$ in~\eqref{eq:tercer-ejemplo}.

\begin{figure}[H]
\includegraphics[scale=0.5]{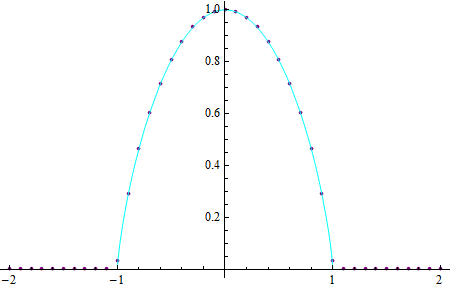}
\caption{The dotted line represents $F_1(j,h,s)+F_2(j,h,s)$ for the function \eqref{eq:tercer-ejemplo}, with $N=1000$, $s=0.25$, $h=0.1$ and $j\in \Z$, $-20\le j\le 20$, so that the horizontal axis is $[-20h,20h]$. The continuous line is $f(x)$ in~\eqref{eq:f2}.}
\label{figure4}
\end{figure}

The continuous line is $f(x)$ in~\eqref{eq:f2}.
For solving the discrete Poisson problem in this case, in Figure~\ref{figure5} the dotted line represents $U_1(j,h,s)$ in \eqref{eq:U1} for the function $f(x)$ in~\eqref{eq:f2}. Since $f(x)$ has compact support, we can choose $N$ such that the term $U_2(j,h,s)$ is zero. The continuous line is $f(x)$ in~\eqref{eq:tercer-ejemplo}.

\begin{figure}[H]
\includegraphics[scale=0.5]{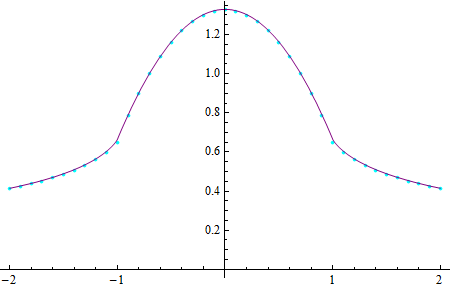}
\caption{The dotted line represents $U_1(j,h,s)$ for the function \eqref{eq:f2}, with $N=20$, $s=0.25$, $h=0.1$ and $j\in \Z$, $-20\le j\le 20$, so that the horizontal axis is $[-20h,20h]$. The continuous line is $u(x)$ in~\eqref{eq:tercer-ejemplo}.}
\label{figure5}
\end{figure}

\subsection{Example 4.}
We present another function with compact support:
\begin{equation}
\label{eq:f3}
f(x) = (1-x^2)_+^{2-s}.
\end{equation}
By Lemma~\ref{lem:uBiler}, the solution of $(-\Delta)^su=f$ is given by
\begin{equation}
\label{eq:cuarto-ejemplo}
u(x) = \begin{cases} 4^{-s} \frac{\Gamma(1/2-s)\Gamma(3-s)}{\pi^{1/2}}
\Big(1-(2-4s)x^2+\big(1-\frac{8}{3}s+\frac{4}{3}s^2\big)x^4\Big),& \text{ if }\, |x|\le 1, \\
4^{-s} \frac{\Gamma(1/2-s)\Gamma(3-s)}{\Gamma(s)\Gamma(7/2-s)}|x|^{2s-1}
\,{}_2F_1\Big(1/2-s,1-s;7/2-s;\frac{1}{|x|^2}\Big),& \text{ if }\, |x|\ge 1, \end{cases}
\end{equation}
see also \cite{Biler-Imbert-Karch} and \cite[formula~(43)]{Huang-Oberman}.

In Figure~\ref{figure6} the dotted line is $F_1(j,h,s)+F_2(h,j,s)$ in~\eqref{eq:F1} and~\eqref{eq:F2} for $u(x)$ as in~\eqref{eq:cuarto-ejemplo}. The continuous line is the exact $F(x)$ in~\eqref{eq:f3}.

\begin{figure}[H]
\includegraphics[scale=0.5]{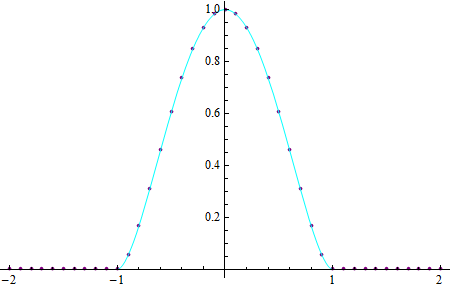}
\caption{The dotted line represents $F_1(j,h,s)+F_2(j,h,s)$ for the function~\eqref{eq:cuarto-ejemplo}, with $N=1000$, $s=0.25$, $h=0.1$ and $j\in \Z$, $-20\le j\le 20$, so that the horizontal axis is $[-20h,20h]$. The continuous line is $f(x)$ in~\eqref{eq:f3}.}
\label{figure6}
\end{figure}

For the discrete Poisson problem, in Figure~\ref{figure7} the dotted line is $U_1(j,h,s)$ in~\eqref{eq:U1} for~\eqref{eq:f3}. Again, since $f(x)$ has compact support, we can choose $N$ large such that $U_2(j,h,x)$ is zero. The continuous line is $u(x)$ in~\eqref{eq:cuarto-ejemplo}.

\begin{figure}[H]
\includegraphics[scale=0.5]{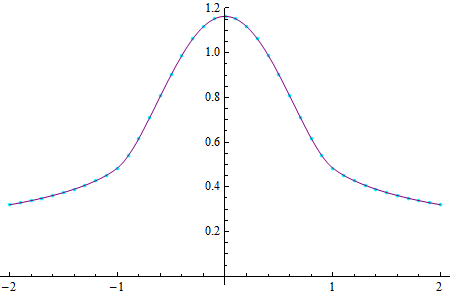}
\caption{The dotted line represents $U_1(j,h,s)$ for the function $f(x)$ in~\eqref{eq:f3}, with $N=20$, $s=0.25$, $h=0.1$ and $j\in \Z$, $-20\le j\le 20$, so that the horizontal axis is $[-20h,20h]$. The continuous line is $u(x)$ in~\eqref{eq:cuarto-ejemplo}.}
\label{figure7}
\end{figure}

\section{Illustrations in dimension two}
\label{sec:twoDim}

In this section we plot approximations of some known examples in dimension two.

Unlike the one dimensional case, we do not have explicit expressions for the two dimensional kernels of the fractional discrete Laplacian and discrete integral. Therefore, computation and evaluation of the kernels is complicated, so we are going to use a less precise fast method. We only have at our disposal the asymptotic estimates for the kernels $K_s(m)$ and $K_{-s}(m)$ of Lemma~\ref{lem:expressionKs2d} and Theorem~\ref{lem:kernelFractionalIntegral}.
Nevertheless, in view of the one dimensional case and Lemma~\ref{eq:lowerBound}, we conjecture that we can approximate the kernel of the fractional discrete Laplacian by the main term in~\eqref{eq:frKernelEstimate2d}, and the kernel of the fractional discrete integral by the main term in~\eqref{eq:fractionalIntegralKernelEstimate}. In this way, for the fractional discrete Laplacian we take
\begin{equation}
\label{eq:Fdos}
(-\Delta_h)^s u_{j} = \frac{c_{2,s}}{h^{2s}} \sum_{\substack{m\in \Z^2\setminus\{0\}\\ |m_i|\le N}}
\frac{u_{j}-u_{j-m}}{\|m\|_2^{2+2s}}+E =: F(j,h,s)+E,
\quad j\in \Z^2,
\end{equation}
for certain $N\in \N$, where $E$ involves the sum with the error terms.

On the other hand, we use formula \eqref{eq:fractionalIntegralKernelEstimate} as an approximation that allows us to \textit{solve} the discrete Poisson problem for a given datum $f$. We write
\begin{equation}
\label{eq:Udos}
\begin{aligned}
u_{j} = (-\Delta_h)^{-s}f_{j} &= c_{-2,s}h^{2s}\bigg(f_{j}K_{-s}(0)
+ \sum_{\substack{m\in \Z^2\setminus\{0\}\\ |m_i|\le N}} \frac{f_{j-m} }{\|m\|_2^{2-2s}}\bigg)+\widetilde{E} \\
&=: U(j,h,s)+\widetilde{E}, \quad j\in \Z^2,
\end{aligned}
\end{equation}
for certain $N\in \N$, where $\widetilde{E}$ involves the sum with the error terms. Moreover, $K_{-s}(0)$ in \eqref{eq:Udos} can be explicitly evaluated. Indeed,
\begin{equation*}
K_{-s}(0)
= \frac{1}{\Gamma(s)} \int_0^\infty G(0,t) \frac{dt}{t^{1-s}}
= \frac{1}{\Gamma(s)} \int_0^\infty e^{-4t}{(I_0(2t))}^2 \,\frac{dt}{t^{1-s}}
= 4^{-s} \,{}_3F_2\Big(\frac12,\frac{1+s}2,\frac{s}2;1,1;1\Big),
\end{equation*}
where ${}_3F_2$ is the generalized hypergeometric function.

\subsection{Example 1.}
The first example we deal with is a function with compact support:
\begin{equation}
\label{eq:f2-2dim}
f(x)=(1-\|x\|_2^2)_+^{1-s}, \quad x\in \R^2.
\end{equation}
By Lemma~\ref{lem:uBiler}, the solution of $(-\Delta)^su=f$ is given by
\begin{equation}
\label{eq:tercer-ejemplo-2d}
u(x)=\begin{cases}4^{-s}\Gamma(1-s)\Gamma(2-s)(1-(1-s)\|x\|_2^2),& \text{ if }\, \|x\|_2\le 1, \\
4^{-s} \frac{\Gamma(1-s)\Gamma(2-s)}{\Gamma(s)\Gamma(3-s)}\|x\|_2^{2s-2}
\,{}_2F_1\Big(1-s,1-s;3-s;\frac{1}{\|x\|_2^2}\Big),& \text{ if }\, \|x\|_2\ge 1. \end{cases}
\end{equation}
In Figure~\ref{figure8} we plot the exact function $f(x)$ in~\eqref{eq:f2-2dim} on the left, the approximation $F(j,h,s)$ in~\eqref{eq:Fdos} related to~\eqref{eq:tercer-ejemplo-2d} in the center and the error (difference) on the right.

\begin{figure}[H]
\includegraphics[scale=0.65]{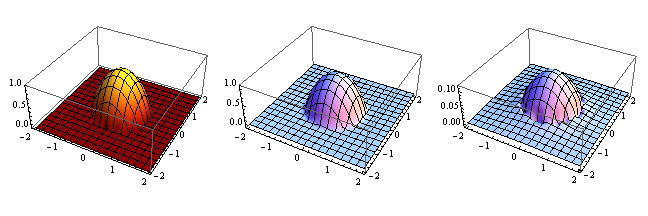}
\caption{The surface on the left is $f(x)$ in~\eqref{eq:f2-2dim}, with $x\in[-2,2]\times[-2,2]$. In the center we find the approximation $F(j,h,s)$ related to the function $u(x)$ in~\eqref{eq:tercer-ejemplo-2d}, with $N=500$, $s=0.25$, $h=0.1$ and $j=(j_1,j_2)\in \Z^2$, $-20\le j_i\le 20$. The mesh is $[-20h,20h]\times[-20h,20h]$. The error in each point of the considered mesh is plotted on the right.}
\label{figure8}
\end{figure}

In Figure~\ref{figure9} we plot the exact function $u(x)$ of \eqref{eq:tercer-ejemplo-2d} on the left, and the picture arising from $U(j,h,s)$ in~\eqref{eq:Udos} related to~\eqref{eq:f2-2dim} on the center. This situation corresponds with the discrete Poisson problem. The error is drawn on the right. Observe that, since $f$ is a function with compact support, there is only a finite number of nonzero terms in the computation of $U(j,h,s)$.

\begin{figure}[H]
\includegraphics[scale=0.65]{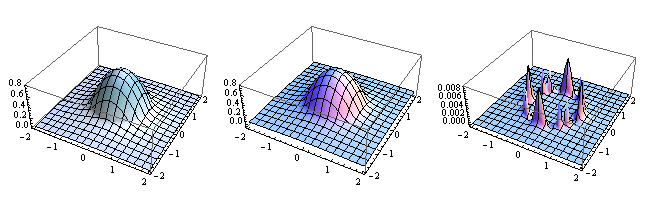}
\caption{The surface on the left is $u(x)$ in~\eqref{eq:tercer-ejemplo-2d}, with $x\in[-2,2]\times[-2,2]$. In the center we find the approximation $U(j,h,s)$ related to $f(x)$ in~\eqref{eq:f2-2dim}, with $N=40$, $s=0.25$, $h=0.1$ and $j=(j_1,j_2)\in \Z^2$, $-20\le j_i\le 20$. The mesh is $[-20h,20h]\times[-20h,20h]$. The error in each point of the considered mesh is plotted on the right.}
\label{figure9}
\end{figure}

\subsection{Example 2.}
The second example concerns the compactly supported function
\begin{equation}
\label{eq:f3-2dim}
f(x)=(1-\|x\|_2^2)_+^{2-s}, \quad x\in \R^2.
\end{equation}
By Lemma~\ref{lem:uBiler}, the solution to $(-\Delta)^su=f$ is given by
\begin{equation}
\label{eq:tercer-ejemplo-2d-bis}
u(x) = \begin{cases}4^{-s}2^{-1}\Gamma(1-s)\Gamma(3-s)(1-(2-2s)\|x\|_2^2
+\big(1-\frac32s+\frac12s^2\big)\|x\|_2^4),& \text{ if }\, \|x\|_2\le 1, \\
4^{-s} \frac{\Gamma(1-s)\Gamma(2-s)}{\Gamma(s)\Gamma(3-s)}\|x\|_2^{2s-2}
\,{}_2F_1\Big(1-s,1-s;3-s;\frac{1}{\|x\|_2^2}\Big),& \text{ if }\, \|x\|_2\ge 1.
\end{cases}
\end{equation}
In Figure~\ref{figure10} we see the picture of $f(x)$ of \eqref{eq:f3-2dim} on the left, the approximation $F(j,h,s)$ in~\eqref{eq:Fdos} related to \eqref{eq:tercer-ejemplo-2d-bis} in the center, and the error on the right.

\begin{figure}[H]
\includegraphics[scale=0.65]{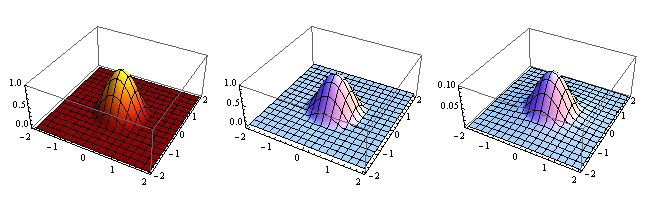}
\caption{The surface on the left is $f(x)$ in~\eqref{eq:f3-2dim}, with $x\in[-2,2]\times[-2,2]$. In the center we find the approximation $F(j,h,s)$ related to $u(x)$ in \eqref{eq:tercer-ejemplo-2d-bis}, with $N=500$, $s=0.25$, $h=0.1$ and $j=(j_1,j_2)\in \Z^2$, $-20\le j_i\le 20$. The mesh is $[-20h,20h]\times[-20h,20h]$. The error in each point of the considered mesh is plotted on the right.}
\label{figure10}
\end{figure}

Next we illustrate the problem of solving the discrete Poisson problem in this case. In Figure~\ref{figure11} the picture on the left is $u(x)$ given in~\eqref{eq:tercer-ejemplo-2d-bis}, the approximation $U(j,h,s)$ in \eqref{eq:Udos} related to $f(x)$ in \eqref{eq:f3-2dim} is in the center (observe that there are finite summands, so we can choose $N$ large so that the term $\widetilde{E}$ is zero), and the error is on the right.

\begin{figure}[H]
\includegraphics[scale=0.65]{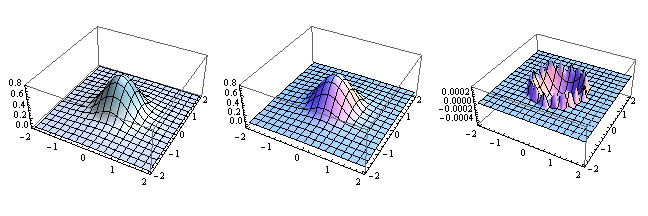}
\caption{The surface on the left is $u(x)$ in~\eqref{eq:tercer-ejemplo-2d-bis}, with $x\in[-2,2]\times[-2,2]$. In the center we find the approximation $U(j,h,s)$ related to $f(x)$ in \eqref{eq:f3-2dim}, with $N=40$, $s=0.25$, $h=0.1$ and $j=(j_1,j_2)\in \Z^2$, $-20\le j_i\le 20$. The mesh is $[-20h,20h]\times[-20h,20h]$. The error in each point of the considered mesh is plotted on the right.}
\label{figure11}
\end{figure}

\subsection{Example 3.}
The last function we take into account is the following. Let $0<s<1$ and $0<\alpha<2-2s$. Consider the function
\begin{equation}
\label{eq:u4-2dim}
u(x)=\|x\|_2^{-\alpha}, \quad x\in \R^2.
\end{equation}
It is known that
\begin{equation}
\label{eq:ejemploRos}
(-\Delta)^su(x) = 2^{2s}\, \frac{\Gamma(\alpha/2+s)\Gamma(1-\alpha/2)}
{\Gamma(1-\alpha/2-s)\Gamma(\alpha/2)}\|x\|_2^{-\alpha-2s} =: f(x).
\end{equation}
Observe that $f(x)$ in \eqref{eq:ejemploRos} has a singularity at the origin. However, we are going to check the behavior of the numerical method by computing the corresponding approximations $F(j,h,s)$ in \eqref{eq:Fdos} for the fractional discrete Laplacian and $U(j,h,s)$ in \eqref{eq:Udos} for the fractional discrete integral. In order to avoid the singularity in the origin, we move the mesh a distance $h/2$ in the direction of each
coordinate axis, so we consider a mesh $[-(J+1/2)h,(J+1/2)h]\times[-(J+1/2)h,(J+1/2)h]$. Then, we see in Figures~\ref{figure12} and~\ref{figure13} respectively that these approximations are close to the exact functions $f(x)$ in \eqref{eq:ejemploRos} and $u(x)$ in~\eqref{eq:u4-2dim}, and the error of the approximation is large only near the origin, which is expected because both $u$ and $f$ tend to infinity close to zero.

\begin{figure}[H]
\includegraphics[scale=0.65]{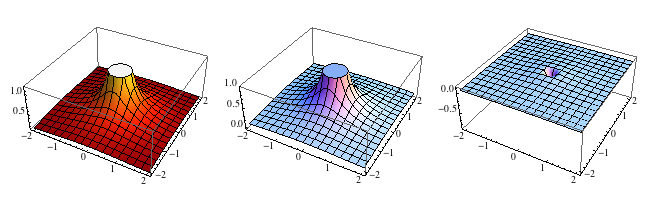}
\caption{The surface on the left if $f(x)$ in \eqref{eq:ejemploRos} with $\alpha=0.5$ and $s=0.3$, for $x\in[-2,2]\times[-2,2]$. In the center we find the approximation $F(j,h,s)$ related to $u(x)$ in \eqref{eq:u4-2dim}, with $N=500$, $h=0.1$ and $j=(j_1,j_2)\in \Z^2$, $-20\le j_i\le 20$. The mesh is $[-(20+1/2)h,(20+1/2)h]
\times[-(20+1/2)h,(20+1/2)h]$. The error in each point of the considered mesh is plotted on the right.}
\label{figure12}
\end{figure}

\begin{figure}[H]
\includegraphics[scale=0.65]{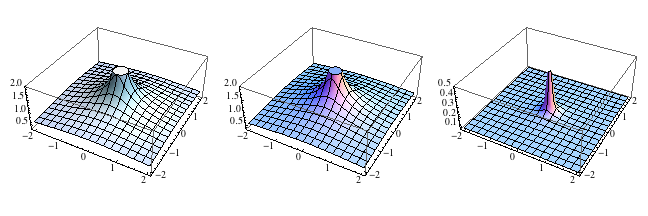}
\caption{The surface on the left is $u(x)$ in \eqref{eq:u4-2dim} with $\alpha=0.5$, for $x\in[-2,2]\times[-2,2]$. In the center we find the approximation $U(j,h,s)$ related to $f(x)$ in~\eqref{eq:ejemploRos}, with $N=500$, $s=0.3$, $h=0.1$ and $j=(j_1,j_2)\in \Z^2$, $-20\le j_i\le 20$. The mesh is $[-(20+1/2)h,(20+1/2)h]\times[-(20+1/2)h,(20+1/2)h]$. The error in each point of the considered mesh is plotted on the right.}
\label{figure13}
\end{figure}

\section{Technical lemmas and properties of Bessel functions}
\label{sec:technical}

\subsection{Some technical lemmas}
\label{subsec:technical-lemmas}

Lemmas in this subsection are needed in the proof of Theorem~\ref{thm:consistencia1d}. They are also useful to get estimates for the kernels of the fractional discrete Laplacian \eqref{eq:kernelFractionalLaplacian} and for the fractional integral kernel in Theorem~\ref{lem:kernelFractionalIntegral}.

The following lemma is elementary. We include it for the sake of completeness.

\begin{lem}
\label{lem:Mario}
Let $\lambda>0$. Let $a,b$ be real numbers such that $0\le a<b<\infty$. Then
\[
\min\{\lambda,1\}\le\frac{b^{\lambda}-a^{\lambda}}{b^{\lambda-1}(b-a)}\le \max\{\lambda,1\}.
\]
\end{lem}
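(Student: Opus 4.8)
The plan is to prove the two-sided bound on the quotient
\[
Q(a,b):=\frac{b^{\lambda}-a^{\lambda}}{b^{\lambda-1}(b-a)}
\]
by reducing to a single-variable estimate and invoking the Mean Value Theorem. First I would dispose of the degenerate case $a=0$, where $Q(0,b)=b^{\lambda}/(b^{\lambda-1}\cdot b)=1$, so both inequalities hold trivially since $\min\{\lambda,1\}\le 1\le\max\{\lambda,1\}$. For $0<a<b$, the homogeneity of $Q$ suggests normalizing: writing $t:=a/b\in(0,1)$, a direct computation gives
\[
Q(a,b)=\frac{1-t^{\lambda}}{1-t},
\]
so the statement becomes: for all $t\in(0,1)$, $\min\{\lambda,1\}\le \dfrac{1-t^{\lambda}}{1-t}\le\max\{\lambda,1\}$.

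Next I would establish this normalized inequality via the Mean Value Theorem applied to $g(x)=x^{\lambda}$ on $[t,1]$: there is $\xi\in(t,1)$ with
\[
\frac{1-t^{\lambda}}{1-t}=g'(\xi)=\lambda\,\xi^{\lambda-1}.
\]
Now I split on the sign of $\lambda-1$. If $\lambda\ge 1$, then $\xi^{\lambda-1}$ is increasing in $\xi$, and since $0<\xi<1$ we get $0<\xi^{\lambda-1}<1$ (with the lower bound approached as $\xi\to 0$, but we only need $\xi^{\lambda-1}\ge 0$; more precisely, since $t<\xi$ we could refine, but the crude bounds suffice), hence $0<\lambda\xi^{\lambda-1}\le\lambda=\max\{\lambda,1\}$, and for the lower bound $\lambda\xi^{\lambda-1}\ge \lambda\cdot\lim_{\xi\to1^-}$ is not quite right — instead note $\xi<1$ forces $\xi^{\lambda-1}\le 1$ only gives the upper bound. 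For the lower bound when $\lambda\ge1$ I instead use that $\xi^{\lambda-1}$ could be small, so the cleaner route is: $\dfrac{1-t^\lambda}{1-t}=1+t+\cdots$ type expansions are awkward for non-integer $\lambda$, so better to argue the lower bound directly from convexity/concavity rather than from the single $\xi$. Concretely, for $\lambda\ge1$ the function $x\mapsto x^\lambda$ is convex, so its chord lies above... — actually the cleanest uniform argument is: apply MVT to get $g'(\xi)=\lambda\xi^{\lambda-1}$ with $\xi\in(t,1)$, then observe that $\lambda\xi^{\lambda-1}$ lies between $\lambda t^{\lambda-1}$ and $\lambda\cdot 1^{\lambda-1}=\lambda$; since $t\to0$ this is not bounded below by a constant, which shows the single-MVT approach needs the monotonicity of $\phi(t):=\frac{1-t^\lambda}{1-t}$ itself.

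So the honest plan is: show $\phi(t)=\frac{1-t^{\lambda}}{1-t}$ is monotone on $(0,1)$, whence its values lie between $\lim_{t\to0^+}\phi(t)=1$ and $\lim_{t\to1^-}\phi(t)=\lambda$ (the latter by L'Hôpital or by recognizing $\phi(t)=g'(\xi)\to g'(1)=\lambda$), giving exactly the interval $[\min\{\lambda,1\},\max\{\lambda,1\}]$. Monotonicity of $\phi$ follows from $\phi'(t)=\frac{-\lambda t^{\lambda-1}(1-t)+(1-t^{\lambda})}{(1-t)^2}$ having constant sign, which reduces to checking the sign of $h(t):=1-t^{\lambda}-\lambda t^{\lambda-1}(1-t)=1-(1-\lambda)t^{\lambda}-\lambda t^{\lambda-1}$; one computes $h(1)=1-(1-\lambda)-\lambda=0$ and $h'(t)=-\lambda(1-\lambda)t^{\lambda-1}-\lambda(\lambda-1)t^{\lambda-2}=\lambda(\lambda-1)t^{\lambda-2}(1-t)$, so $h'$ has constant sign on $(0,1)$ equal to $\operatorname{sgn}(\lambda-1)$, hence $h$ is monotone and, vanishing at $t=1$, has sign $-\operatorname{sgn}(\lambda-1)$ on $(0,1)$; this pins down the sign of $\phi'$ and finishes the proof. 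The main obstacle is purely bookkeeping: handling the two regimes $\lambda\gtrless 1$ (and the trivial $\lambda=1$) consistently and getting the direction of monotonicity right, together with the boundary evaluations $\phi(0^+)=1$ and $\phi(1^-)=\lambda$; there is no real analytic difficulty beyond that.
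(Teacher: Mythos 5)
Your final plan is correct, and it takes a genuinely different route from the paper's. After the same normalization $t=a/b$, the paper handles the regime $\lambda\ge 1$ by two separate one-line arguments: the lower bound $\frac{1-t^{\lambda}}{1-t}\ge 1$ comes from the elementary inequality $t^{\lambda}\le t$ for $t\in[0,1)$, and the upper bound from a single application of the Mean Value Theorem together with the crude estimate $x^{\lambda-1}\le 1$ for $x\in(t,1)$; the case $0<\lambda<1$ is then declared analogous. You correctly diagnose that the MVT alone cannot deliver both bounds (since $\xi^{\lambda-1}$ is not bounded below uniformly in $t$), and instead prove that $\phi(t)=\frac{1-t^{\lambda}}{1-t}$ is monotone on $(0,1)$ with endpoint limits $\phi(0^{+})=1$ and $\phi(1^{-})=\lambda$; this pins the range of $\phi$ to exactly the interval between $\min\{\lambda,1\}$ and $\max\{\lambda,1\}$, a slightly stronger and sharp statement, at the cost of a second differentiation. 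One bookkeeping remark: your computation of $h'$ carries a sign error. One finds $h'(t)=\lambda(\lambda-1)t^{\lambda-1}-\lambda(\lambda-1)t^{\lambda-2}=-\lambda(\lambda-1)t^{\lambda-2}(1-t)$, so $h'$ has sign $-\operatorname{sgn}(\lambda-1)$ on $(0,1)$ and, since $h(1)=0$, $h$ has sign $+\operatorname{sgn}(\lambda-1)$ there (check $\lambda=2$: $h(t)=(1-t)^{2}>0$, consistent with $\phi(t)=1+t$ increasing). This does not damage the proof: your argument only uses that $h$, hence $\phi'$, has \emph{constant} sign, so that $\phi$ is monotone in one direction or the other, and the conclusion that the values of $\phi$ lie between the two endpoint limits is indifferent to which direction that is. The several abandoned false starts in the write-up should of course be pruned, but the surviving argument is complete.
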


\begin{proof}
Let us first suppose that $\lambda\ge1$. Then
\[
0\le a<b<\infty\Rightarrow 0\le a/b<1\Rightarrow 0\le(a/b)^{\lambda}
\le a/b<1\Rightarrow \frac{b^{\lambda}-a^{\lambda}}{b^{\lambda-1}(b-a)}=\frac{1-(a/b)^{\lambda}}{1-a/b}\ge1.
\]
On the other hand, by applying the mean value theorem to the function $x^{\lambda}$, we get
\[
\frac{b^{\lambda}-a^{\lambda}}{b^{\lambda-1}(b-a)}=\frac{1-(a/b)^{\lambda}}{1-a/b}=\lambda x^{\lambda-1}\le \lambda,
\]
for certain $x\in (a/b,1)$. In the case $0<\lambda<1$, the proof is analogous.
\end{proof}

\begin{lem}
\label{eq:lowerBound}
Let $0<s<1$, $t\in \R$, and $m\in \Z$, $m\neq 0$. Then,
\begin{equation}
\label{eq:lowerLap}
\bigg|\frac{\Gamma(|m|-s)}{\Gamma(|m|+1+s)}-\frac{1}{|m|^{1+2s}}\bigg|\le \frac{C_{s}}{|m|^{2+2s}}.
\end{equation}
Also, if $0<s<1/2$, we have
\begin{equation}
\label{eq:lowerFrac}
\bigg|\frac{\Gamma(|m|+s)}{\Gamma(|m|+1-s)}-\frac{1}{|m|^{1-2s}}\bigg|\le \frac{C_{s}}{|m|^{2-2s}}.
\end{equation}
\end{lem}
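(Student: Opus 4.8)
The plan is to read both inequalities off the asymptotic expansion \eqref{eq:term2} for a quotient of two Gamma functions, specialized to two convenient choices of parameters. For \eqref{eq:lowerLap} I would set $z=|m|$, $a=-s$ and $b=1+s$, so that $\Gamma(z+a)/\Gamma(z+b)$ is precisely $\Gamma(|m|-s)/\Gamma(|m|+1+s)$ and $z^{a-b}=|m|^{-(1+2s)}$ is the expected main term. For \eqref{eq:lowerFrac} the analogous choice is $z=|m|$, $a=s$, $b=1-s$, giving $z^{a-b}=|m|^{-(1-2s)}$; here the hypothesis $0<s<1/2$ is exactly what makes $b=1-s>0$ and keeps the exponent $1-2s$ positive, so that the expansion applies and the target term has the right sign.

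The observation I would highlight is that in both cases $a+b-1=0$, so the coefficient $\tfrac{(a-b)(a+b-1)}{2z}$ of the $1/z$ term in \eqref{eq:term2} vanishes and the bracket there equals $1+\mathcal{O}(|m|^{-2})$. Multiplying by $z^{a-b}$ and adding the remainder $E(a,b,|m|)$, bounded by $C_s|m|^{-(3+b-a)}$ with $3+b-a$ equal to $4+2s$ (respectively $4-2s$), hence positive, one gets
\[
\Big|\frac{\Gamma(|m|-s)}{\Gamma(|m|+1+s)}-\frac1{|m|^{1+2s}}\Big|\le \frac{C_s}{|m|^{3+2s}},\qquad
\Big|\frac{\Gamma(|m|+s)}{\Gamma(|m|+1-s)}-\frac1{|m|^{1-2s}}\Big|\le \frac{C_s}{|m|^{3-2s}},
\]
for $|m|$ large, which is even stronger than the claimed bounds $C_s/|m|^{2+2s}$ and $C_s/|m|^{2-2s}$. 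In fact, even if one did not notice the cancellation, the $1/z$ term by itself contributes $|m|^{-(1+2s)}\cdot\mathcal{O}(|m|^{-1})=\mathcal{O}(|m|^{-(2+2s)})$, which already suffices; so the lemma is a fairly immediate consequence of \eqref{eq:term2}. To pass from ``$|m|$ large'' to ``all $m\neq 0$'', I would just note that for each of the finitely many remaining integers $m$ both sides are finite (since $|m|\ge 1>s$), so those values are absorbed into $C_s$.

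If one prefers to sidestep \eqref{eq:term2}, an alternative is the Beta-function identity $\frac{\Gamma(|m|-s)}{\Gamma(|m|+1+s)}=\frac{1}{\Gamma(1+2s)}\int_0^\infty e^{-(|m|-s)v}(1-e^{-v})^{2s}\,dv$, compared with $|m|^{-(1+2s)}=\frac{1}{\Gamma(1+2s)}\int_0^\infty e^{-|m|v}v^{2s}\,dv$: the difference becomes $\frac{1}{\Gamma(1+2s)}\int_0^\infty e^{-|m|v}\big(e^{sv}(1-e^{-v})^{2s}-v^{2s}\big)\,dv$, where the factor in parentheses is $\mathcal{O}(v^{2+2s})$ as $v\to 0^+$ and dominated by $Ce^{sv}$ for $v\ge 1$, yielding $\mathcal{O}(|m|^{-(3+2s)})$ plus an exponentially small tail; the estimate \eqref{eq:lowerFrac} is obtained the same way, with $(1-e^{-v})^{-2s}$ in place of $(1-e^{-v})^{2s}$ and with $0<s<1/2$ guaranteeing integrability near $v=0$. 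Either way there is no real obstacle --- the content is just the standard asymptotics of ratios of Gamma functions --- and the only point deserving a little care is the uniformity of $C_s$ down to $|m|=1$, handled by treating a finite initial segment of $m$ by hand.
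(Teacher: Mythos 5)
Your proof is correct, but your primary argument takes a genuinely different route from the paper's. The paper does not invoke the Olver expansion \eqref{eq:term2} here; instead it inserts the intermediate quantity $(|m|-s)^{-(1+2s)}$, estimates $\bigl|(|m|-s)^{-(1+2s)}-|m|^{-(1+2s)}\bigr|$ via the elementary Lemma~\ref{lem:Mario}, and compares $\Gamma(|m|-s)/\Gamma(|m|+1+s)$ with $(|m|-s)^{-(1+2s)}$ through the Tricomi--Erd\'elyi integral representation \eqref{eq:Tricomi}, bounding $|v^{2s}-(1-e^{-v})^{2s}|$ pointwise. Your route via \eqref{eq:term2} is shorter and, thanks to the cancellation $a+b-1=0$ that you correctly spot, yields the sharper rates $C_s|m|^{-(3+2s)}$ and $C_s|m|^{-(3-2s)}$; the paper's detour through $(|m|-s)^{-(1+2s)}$ caps the rate at $|m|^{-(2+2s)}$, which is all the lemma needs. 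Your alternative Beta-integral argument is essentially the paper's second step but streamlined: comparing directly with $|m|^{-(1\pm 2s)}$ exhibits the same extra cancellation at order $v^{2s+1}$. Two small remarks. First, your stated reason for the restriction $0<s<1/2$ in \eqref{eq:lowerFrac} under the first approach is not quite right: $b=1-s>0$ holds for all $s\in(0,1)$, and the expansion does not require $1-2s>0$; the restriction is simply part of the hypothesis, and it genuinely matters only in the integral route, where --- as you correctly note --- it makes $(1-e^{-v})^{-2s}$ integrable near $v=0$ (it is likewise what keeps Lemma~\ref{lem:Mario} applicable in the paper's version). Second, your absorption of the finitely many small $|m|$ into $C_s$ is the right, and necessary, closing step, since the expansion only controls large $z$.
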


\begin{proof}
Let us begin with the proof of~\eqref{eq:lowerLap}. Without loss of generality, take $m\in \N$. We write
\begin{align*}
\bigg|\frac{\Gamma(m-s)}{\Gamma(m+1+s)}-\frac{1}{m^{1+2s}}\bigg|
&\le\bigg|\frac{\Gamma(m-s)}{\Gamma(m+1+s)}-\frac{1}{(m-s)^{1+2s}}\bigg| +\bigg|\frac{1}{(m-s)^{1+2s}}-\frac{1}{m^{1+2s}}\bigg|.
\end{align*}
The second term can be easily estimated, just by applying Lemma~\ref{lem:Mario} with $\lambda=1+2s$, $a=\frac{1}{m}$ and $b=\frac{1}{m-s}$, namely
\[
\bigg|\frac{1}{(m-s)^{1+2s}}-\frac{1}{m^{1+2s}}\bigg|
\simeq \frac{1}{(m-s)^{2s}}\bigg(\frac{1}{m-s}-\frac{1}{m}\bigg) \simeq \frac{C_{s}}{m^{2+2s}},
\]
where the symbol $\simeq$ means that constants depend only on~$s$.
Now we study the first term. For $k\in \N$, we have (see for instance \cite[Section~7]{Tricomi-Erdelyi})
\begin{equation}
\label{eq:Tricomi}
\frac{\Gamma(k-s)}{\Gamma(k+n+s)}=\frac{1}{\Gamma(n+2s)}\int_0^{\infty}e^{-(k-s)v}(1-e^{-v})^{n+2s-1}\,dv.
\end{equation}
With this,
\begin{align*}
\Gamma(1+2s)\bigg|\frac{\Gamma(m-s)}{\Gamma(m+n+s)}-\frac{1}{(m-s)^{1+2s}}\bigg|
&\le \int_0^{\infty}e^{-(m-s)v}\big|v^{2s}-(1-e^{-v})^{2s}\big|\,dv\\
&=\int_0^{\infty}e^{-(m-s)v}v^{2s}\bigg|1-\Big(\frac{1-e^{-v}}{v}\Big)^{2s}\bigg|\,dv\\
&\simeq \int_0^{\infty}e^{-(m-s)v}v^{2s}\bigg|1-\frac{1-e^{-v}}{v}\bigg|\,dv\\\
&\le \frac12\int_0^{\infty}e^{-(m-s)v}v^{2s+1}\,dv\simeq\frac{\Gamma(1+2s)}{2}\frac{1}{m^{2+2s}},
\end{align*}
where we applied Lemma~\ref{lem:Mario}, 
and in the last inequality we used that $\frac{v^2}{2}>v-1+e^{-v}$ for $v\in (0,\infty)$.
The proof of \eqref{eq:lowerFrac} is analogous, with the restriction $0<s<1/2$ coming from Lemma~\ref{lem:Mario}.
\end{proof}

\subsection{Properties of Bessel functions $I_k$}
\label{sec:preliminaries}

We collect in this subsection some properties of modified Bessel functions.
Let $I_k$ be the modified Bessel function of the first kind and order $k\in \Z$, defined as
\begin{equation}
\label{eq:Ik}
I_k(t) = i^{-k} J_k(it) = \sum_{m=0}^{\infty} \frac{1}{m!\,\Gamma(m+k+1)} \left(\frac{t}{2}\right)^{2m+k}.
\end{equation}
Since $k$ is an integer and $1/\Gamma(n)$ is taken to be equal zero if $n=0,-1,-2,\ldots$, the function $I_k$ is defined in the whole real line.
It is verified that
\begin{equation}
\label{eq:negPos}
I_{-k}(t) = I_k(t),
\end{equation}
for each $k \in \Z$. Besides, from~\eqref{eq:Ik} it is clear that
$I_0(0) = 1$ and $I_k(0) = 0$ for $k \ne 0$.
Also,
\begin{equation}
\label{eq:Ik>0}
I_k(t) \ge 0
\end{equation}
for every $k \in \Z$ and $t\ge0$, and
\[
\sum_{k \in \Z} e^{-2t} I_k(2t) = 1.
\]
From this it can be verified that
\begin{equation}
\label{eq:sumIk}
\sum_{k \in \Z^n} e^{-2nt} \prod_{i=1}^nI_{k_i}(2t) = 1.
\end{equation}
On the other hand, there exist constants $C,c>0$ such that
\begin{equation}
\label{eq:asymptotics-zero}
c t^k\le I_k(t)\le C t^k, \quad \text{ as } t\to0^+.
\end{equation}
In fact,
\begin{equation}
\label{eq:asymptotics-zero-ctes}
I_k(t)\sim \bigg(\frac{t}{2}\bigg)^k\frac{1}{\Gamma(k+1)}, \quad \text{ for a fixed } k\neq -1,-2,-3,\ldots~\text{and}~t\to0^+,
\end{equation}
see \cite{OlMax}.
It is well known (see \cite{Lebedev}) that
\begin{equation}
\label{eq:asymptotics-infinite}
I_k(t)=C e^t t^{-1/2}+ R_k(t),
\end{equation}
where
\[
|R_k(t)|\le C_k e^tt^{-3/2}, \quad \text{ as } t\to\infty.
\]
We also have (see \cite{OlMax}) that, as $\nu\to \infty$,
\begin{equation}
\label{eq:asymptotics-order-large}
I_{\nu}(z)\sim \frac{1}{\sqrt{2\pi \nu}}\bigg(\frac{e z}{2 \nu}\bigg)^{\nu}\sim \frac{z^\nu}{2^{\nu} \nu!}.
\end{equation}
For the following formula see \cite[p.~305]{Prudnikov2}. For $\Re c>0$, $-\Re \nu<\Re \alpha<1/2$,
\begin{equation}
\label{eq:rusos}
\int_0^{\infty} e^{-ct} I_{\nu}(ct) t^{\alpha-1}\,dt
= \frac{(2c)^{-\alpha}}{\sqrt{\pi}} \frac{\Gamma(1/2-\alpha)\Gamma(\alpha+\nu)
}{\Gamma(\nu+1-\alpha)}.
\end{equation}

\medskip

\noindent\textbf{Acknowledgments.}
This research was initiated when the second author was visiting The University of Texas at Austin.
She is grateful to the Department of Mathematics for their kind hospitality.



\end{document}